\theoremstyle{plain}
\newtheorem{thm}{Theorem}[section]
\newtheorem{lem}[thm]{Lemma}
\newtheorem{dfn}[thm]{Definition}
\newtheorem{prop}[thm]{Proposition}
\newtheorem{rmk}[thm]{Remark}
\def\C{\mathscr{C}}
\def\D{\mathrm{D}}
\def\G{\mathscr{G}}
\def\T{\mathrm{T}}
\def\d{\mathrm{d}}
\def\h{\mathrm{h}}
\def\s{\mathrm{s}}
\def\u{\mathrm{u}}
\def\Cset{\mathbb{C}}
\def\Kset{\mathbb{K}}
\def\Lset{\mathbb{L}}
\def\Nset{\mathbb{N}}
\def\Rset{\mathbb{R}}
\def\Sset{\mathbb{S}}
\def\GL{\mathrm{GL}}
\def\id{\mathrm{id}}
\def\epsilon{\varepsilon}
\DeclareMathOperator{\cn}{cn}
\DeclareMathOperator{\sech}{sech}
\DeclareMathOperator{\cosech}{cosech}
\begin{document}

% **********************************************************
% Title of This Paper
% **********************************************************

\title[Nonintegrability near homo- and heteroclinic orbits]%
{Nonintegrability of time-periodic perturbations of single-degree-of-freedom Hamiltonian systems
 near homo- and heteroclinic orbits}

\author{Kazuyuki Yagasaki}

\address{Department of Applied Mathematics and Physics, Graduate School of Informatics,
Kyoto University, Yoshida-Honmachi, Sakyo-ku, Kyoto 606-8501, JAPAN}
\email{yagasaki@amp.i.kyoto-u.ac.jp}

\date{\today}
\subjclass[2020]{37J30; 34C15; 37J40; 34E10; 37C29; 37C37}
\keywords{Nonintegrability; nonlinear oscillator; perturbation; homoclinic orbit; heteroclinic orbit;
Morales-Ramis theory; Melnikov method.}

\begin{abstract}
We consider time-periodic perturbations of single-degree-of-\linebreak
 freedom Hamiltonian systems
 and study their \emph{real-meromorphic} nonintegrability in the Bogoyavlenskij sense
 using a generalized version due to Ayoul and Zung of the Morales-Ramis theory.
The perturbation terms are assumed to have finite Fourier series in time,
 and the perturbed systems are rewritten as higher-dimensional autonomous systems
 having the small parameter as a state variable.
We show that if the Melnikov functions are not constant,
 then the autonomous systems are not \emph{real-meromorphically} integrable
 near homo- and heteroclinic orbits.
Our result is not just an extension of previous results for homocliic orbits to heteroclinic orbits
 and provides a stronger conclusion than them for the case of homoclinic orbits.
We illustrate the theory for two periodically forced Duffing oscillators
 and a periodically forced two-dimensional system.
\end{abstract}
\maketitle

% **********************************************************
% Section 1
% **********************************************************

\section{Introduction}

\begin{figure}[t]
\includegraphics[scale=1]{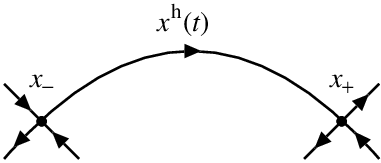}
\caption{Assumptions~(A1) and (A2).
\label{fig:1a}}
\end{figure}

In this paper we study the nonintegrability of systems of the form
\begin{equation}
\dot{x}=J\D H(x)+\epsilon g(x,\omega t),\quad
x\in\Rset^2,
\label{eqn:syse}
\end{equation}
where $\epsilon$ is a small parameter such that $0<|\epsilon|\ll 1$, $\omega>0$ is a constant,
 $H:\Rset^2\to\Rset$ and $g:\Rset^2\times\Sset^1$ are analytic,
 and $J$ is the $2\times 2$ symplectic matrix,
\[
J=
\begin{pmatrix}
0 & 1\\
-1 & 0
\end{pmatrix}.
\]
When $\epsilon= 0$, the system \eqref{eqn:syse} becomes a planar Hamiltonian system
\begin{equation}
\dot{x}=J\D H(x)
\label{eqn:sys0}
\end{equation}
with a Hamiltonian function $H(x)$.
Thus, the system \eqref{eqn:syse} represents a time-periodic perturbation
 of the single-degree-of-freedom Hamiltonian system.
We make the following assumptions on the unperturbed system \eqref{eqn:sys0}:
\begin{enumerate}
\setlength{\leftskip}{-1em}
\item[\bf(A1)]
There exist two saddles at $x=x_\pm$
 such that the Jacobian matrices $J\D^2H(x_\pm)$
 have a pair of real eigenvalues $\lambda_\pm,-\lambda_\pm$,
 where the upper or lower signs in the subscripts are taken simultaneously
 and $\lambda_\pm>0$.
\item[\bf(A2)]
The two saddles $x=x_\pm$  are connected by a heteroclinic orbit $x^\h(t)$.
See Fig.~\ref{fig:1a}.
\end{enumerate}
In assumption~(A1) we allow $x_+=x_-$.
If $x_+=x_-$, then $x^\h(t)$ is a homoclinic orbit in (A2). 

Systems of the form \eqref{eqn:syse} represent many forced nonlinear oscillators
 and have attracted much attention \cite{GH83b,W03}.
In particular, perturbation techniques called the homoclinic and subharmonic Melnikov methods 
 have been developed:
The homoclinc Melnikov method
 enables us to discuss the existence of transverse homo- and heteroclinic orbits and their bifurcations
 \cite{GH83b,M63,W03},
 and the subhamonic Melnikov method 
 to discuss the existence of periodic orbits and their stability and bifurcations
 \cite{GH83a,GH83b,W03,Y96,Y02,Y03a}.
For example, if the (homo- or heteroclinic) Melnikov function 
\begin{equation}
M(\theta)=\int_{-\infty}^\infty \D H(x^\h(t))\cdot g(x^\h(t),\omega t+\theta)\d t
\label{eqn:Mel}
\end{equation}
has a simple zero $\theta=\theta_0\in\Sset^1$, i.e.,
\[
M(\theta_0)=0,\quad
\frac{\d M}{\d\theta}(\theta_0)\neq 0,
\]
then there exist transverse homo- or heteroclinic orbits,
 depending on whether $x^\h(t)$ is a homo- or heteroclinic orbit.
In particular, the existence of transverse homoclinic orbits implies
 by the Smale-Birkhoff  homocloinic theorem \cite{GH83b,W03} that
 chaotic motions occurs in \eqref{eqn:syse}.
The techniques have been successfully applied
 to reveal the dynamics of numerous forced nonlinear oscillators.
See \cite{GH83a,GH83b,W03,Y96,Y02,Y03a} for more details.

We rewrite \eqref{eqn:syse} as an autonomous system
\begin{equation}
\dot{x}=J\D H(x)+\epsilon g(x,\theta),\quad
\dot{\theta}=\omega,\quad
(x,\theta)\in\Rset^2\times\Sset^1.
\label{eqn:asys}
\end{equation}
We extend the domains of the independent and state variables, $t$ and $x$,
 to those containing $\Rset$ and $\Rset^2$ in $\Cset$ and $\Cset^2$, respectively, if necessary.
We adopt the following definition of integrability in the Bogoyavlenskij sense \cite{B98}.
 
\begin{dfn}[Bogoyavlenskij]
\label{dfn:1a}
For $n,q\in\Nset$ such that $1\le q\le n$,
 an $n$-dimensional dynamical system
\begin{equation}
\dot{x}=f(x),\quad x\in\Rset^n\text{ or }\Cset^n,
\label{eqn:gsys}
\end{equation}
is called \emph{$(q,n-q)$-integrable} or simply \emph{integrable} 
 if there exist $q$ vector fields $f_1(x)(:= f(x)),f_2(x),\dots,f_q(x)$
 and $n-q$ scalar-valued functions $F_1(x),\dots,F_{n-q}(x)$ such that
 the following two conditions hold:
\begin{enumerate}
\setlength{\leftskip}{-1.8em}
\item[\rm(i)]
$f_1(x),\dots,f_q(x)$ are linearly independent almost everywhere and commute with each other,
 i.e., $[f_j,f_k](x):=\D f_k(x)f_j(x)-\D f_j(x)f_k(x)\equiv 0$ for $j,k=1,\ldots,q$,
 where $[\cdot,\cdot]$ denotes the Lie bracket$;$
\item[\rm(ii)]
The derivatives $\D F_1(x),\dots, \D F_{n-q}(x)$ are linearly independent almost everywhere
 and $F_1(x),\dots,F_{n-q}(x)$ are first integrals of $f_1, \dots,f_q$,
 i.e., $\D F_k(x)\cdot f_j(x)\equiv 0$ for $j=1,\ldots,q$ and $k=1,\ldots,n-q$,
 where ``$\cdot$'' represents the inner product.
\end{enumerate}
We say that the system \eqref{eqn:gsys}
 is \emph{analytically} $($resp. \emph{meromorphically}$)$ \emph{integrable}
 if the first integrals and commutative vector fields are analytic $($resp. meromorphic$)$. 
\end{dfn}
Definition~\ref{dfn:1a} is considered as a generalization of %complete
 Liouville-integrability for Hamiltonian systems \cite{A89,M99}
 since an $n$-degree-of-freedom Liouville-integrable Hamiltonian system with $n\ge 1$
 has not only $n$ functionally independent first integrals
 but also $n$ linearly independent commutative (Hamiltonian) vector fields
 generated by the first integrals.
We treat \eqref{eqn:asys} directly in the framework of Bogoyavlenskij-integrability
 even when the perturbation term $g(x,\theta)$ is Hamiltonian,
 i.e., $g(x,\theta)=J\D_x\tilde{H}(x,\theta)$
 for some function $\tilde{H}:\Rset^2\times\Sset^1\to\Rset$.
So the system \eqref{eqn:asys} is analytically $(2,1)$-integrable when $\epsilon=0$
 since $H(x)$ is an analytical first integral
 and $\partial/\partial\theta$ is an analytic commutative  vector field,
 and it may be $(q,3-q)$-integrable with $q=1,2$ or $3$ when $\epsilon>0$.

The nonintegrability of \eqref{eqn:asys} near the homoclinic orbit was studied
 by Morales-Ruiz \cite{M02} earlier and by the author and coworker \cite{MY22a} very recently.
Morales-Ruiz \cite{M02} discussed the case of Hamiltonian perturbations
 and showed a relationship of their nonintegrability
 with a version due to Ziglin \cite{Z82} of the Melnikov method, %\cite{M63}.
 which enables us to detect transversal self-intersection of complex separatrices
 of periodic orbits, unlike the standard version \cite{GH83b,M63,W03}.
More concretely, under some restrictive conditions
 (see Remarks~\ref{rmk:1a}(ii) and \ref{rmk:4a}(ii) below),
 he essentially proved that they are \emph{complex-meromorphically} nonintegrable
 in the Bogoyavlenskij sense 
 when the small parameter $\epsilon$ is taken as one of the state variables
 if the Melnikov function which is a contour integral having the same integrand as \eqref{eqn:Mel}
 along a closed path in the complex plane is not identically zero,
 based on a generalized version due to Ayoul and Zung \cite{AZ10}
 of the Morales-Ramis theory \cite{M99,MR01}.
The generalized theory says that the system \eqref{eqn:gsys} is Bogoyavlenskij-nonintegrable
 near a particular nonconstant solution
 if the identity component of the diﬀerential Galois group for the variational equation (VE),
 i.e., the linearized equation, of \eqref{eqn:gsys} along the solution is not commutative.
See Section~2 for more details.
 
On the other hand, Motonaga and Yagasaki \cite{MY22a} developed a technique
 which allows us to prove the real-analytic nonintegrability
 of nearly integrable dynamical systems containing \eqref{eqn:asys} was developed,
 based on the results of \cite{MY21}.
In particular, they showed that if the Melnikov function \eqref{eqn:Mel} is not constant,
 then the system \eqref{eqn:asys} is not \emph{real-analytically} integrable
 in a region near the homoclinic orbit
 such that the first integrals and commutative vector fields also depend real-analytically
 on $\epsilon$ near $\epsilon=0$, when in \eqref{eqn:sys0}
 there exists a one-parameter family of periodic orbits which converge to the homoclinic orbit
 as their periods tend to infinity.
Note that the results of \cite{M02,MY22a} do not apply immediately
 when $x^\h(t)$ is a heteroclinic orbit.

Moreover, the author developed a technique which permits us
 to prove \emph{complex-meromorphic} nonintegrability
 of nearly integrable dynamical systems
 near resonant periodic orbits in \cite{Y22c,Y22d},
 based on the generalized version of the Morales-Ramis theory
 and its extension, the Morales-Ramis-Sim\'o theory \cite{MRS07}.
In particular, he showed in \cite{Y22c} that if a contour integral
 which is similar to the Melnikov function in \cite{M02}
 but depend on the unperturbed resonant periodic orbit is not zero,
 then the system \eqref{eqn:asys} is not complex-meromorphically integrable
 near the periodic orbit
 such that the first integrals and commutative vector fields
 also depend complex-meromorphically on $\epsilon$ near $\epsilon=0$,
 when in \eqref{eqn:sys0} there exists a one-parameter family of periodic orbits.
These techniques were successfully applied to the Duffing oscillators,
 one of which is the same as Eq.~\eqref{eqn:ex1} below, in \cite{M02,MY22a,Y22c}
 and to a forced pendulum in \cite{MY22b}.

We now state our main result.
We additionally assume the following:
\begin{enumerate}
\setlength{\leftskip}{-1em}
\item[\bf(A3)]
The perturbation term $g(x,\theta)$ has a finite Fourier series, i.e.,
\[
g(x,\theta)
 =\sum_{j=-N}^N\hat{g}_j(x)e^{ij\theta},
\]
where $N\in\Nset$ and $\hat{g}_j(x)$, $j=-N,\ldots,N$, are meromorphic.
\end{enumerate}

\begin{rmk}\
\label{rmk:1a}
\begin{enumerate}
\setlength{\leftskip}{-1.8em}
\item[(i)]
We require that
 $\hat{g}_j(x)$, $j=-N,\ldots,N$, are analytic on $\Rset^2$,
 since $g(x,\theta)$ is analytic on $\Rset^2\times\Sset^1$.
\item[(ii)]
Morales-Ruiz {\rm\cite{M02}} assumed that
 $g(x,\theta)$ is represented by rational functions of $e^{i\theta}$
 with meromorphic coefficients of $x$
 but that $x^\h(t)$ is a homoclinic orbit expressed
 by rational functions of an exponential function, say $e^{\lambda t}$ with $\lambda>0$.
See Assumption $(1)$ of {\rm\cite{M02}}.
\end{enumerate}
\end{rmk}

Since $g(x,\theta)$ must be real on $\Rset^2\times\Sset^1$ and
\[
\hat{g}_j(x)=\frac{1}{2\pi}\int_0^{2\pi}g(x,\theta)e^{-ij\theta}\d\theta,
\]
we have $\hat{g}_j^\ast(x)=\hat{g}_{-j}(x)$,
 where the superscript `$\ast$' represents complex conjugate.
Under assumption (A3) we rewrite \eqref{eqn:syse} as
\begin{equation}
\begin{split}
&
\dot{x}=J\D H(x)+\epsilon a_0(x)+\sum_{j=1}^N(a_j(x)u_j+b_j(x)v_j),\\
&
\dot{\epsilon}=0,\quad
\dot{u}_j=-j\omega v_j,\quad
\dot{v}_j=j\omega u_j,\quad
j=1,\ldots,N,
\end{split}
\label{eqn:rsys}
\end{equation}
where
\begin{align*}
&
a_0(x)=\hat{g}_0(x),\quad
a_j(x)=\hat{g}_j(x)+\hat{g}_{-j}(x),\\
&
b_j(x)=i(\hat{g}_j(x)-\hat{g}_{-j}(x)),\quad
j=1,\ldots,N.
\end{align*}
Note that $a_0(x)$, $a_j(x)$ and $b_j(x)$, $j=1,\ldots,N$, are real for $x\in\Rset^2$,
 and that $(u_j,v_j)=(\epsilon\cos j\omega t,\epsilon\sin j\omega t)$ is a solution
 to the $(u_j,v_j)$-components of \eqref{eqn:rsys} for $j=1,\ldots,N$.
Let $u=(u_1,\ldots,u_N)$ and $v=(v_1,\ldots,v_N)$.
Our main theorem is stated as follows.

\begin{thm}
\label{thm:main}
Suppose that the Melnikov function $M(\theta)$ is not constant under assumptions~{\rm(A1)-(A3)}.
Then the system \eqref{eqn:rsys} is not real-meromorphically integrable near
\[
\hat{\Gamma}
=\{(x,\epsilon,u,v)=(x^\h(t),0,0,0)\in\Rset^2\times\Rset\times\Rset^N\times\Rset^N\mid t\in\Rset\}
\cup\{(x_\pm,0,0,0)\}
\]
in $\Rset^{2N+3}$.
\end{thm}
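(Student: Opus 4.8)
The plan is to invoke the generalized Morales–Ramis theory of Ayoul and Zung described in the introduction, which reduces the claim to a purely Galois-theoretic statement: it suffices to show that the identity component of the differential Galois group of the variational equation (VE) of \eqref{eqn:rsys} along some nonconstant solution lying on $\hat{\Gamma}$ is non-commutative. The natural base solution is the heteroclinic one, $(x,\epsilon,u,v)=(x^\h(t),0,0,0)$; establishing non-commutativity of $G^0$ for its VE then obstructs real-meromorphic Bogoyavlenskij-integrability near $\hat{\Gamma}$.

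First I would compute the VE. Writing the increments as $(\xi,\eta,\mu,\nu)$ with $\mu=(\mu_1,\ldots,\mu_N)$ and $\nu=(\nu_1,\ldots,\nu_N)$ and linearizing at $(x^\h(t),0,0,0)$, the $\epsilon$-, $u$- and $v$-blocks decouple, since the terms multiplying $\epsilon,u_j,v_j$ vanish at the base point: one gets $\dot\eta=0$ together with $\dot\mu_j=-j\omega\nu_j$, $\dot\nu_j=j\omega\mu_j$, so that each $(\mu_j,\nu_j)$ is spanned by $e^{\pm ij\omega t}$. The $\xi$-block is the homogeneous Hamiltonian VE $\dot\xi=J\D^2H(x^\h(t))\xi$ driven by the inhomogeneity $r(t)=\eta\,a_0(x^\h)+\sum_{j=1}^N(a_j(x^\h)\mu_j+b_j(x^\h)\nu_j)$. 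Thus the VE is block upper-triangular over an abelian ``lower'' block ($\eta$ constant, $(\mu_j,\nu_j)$ purely oscillatory), and all the nontrivial structure sits in the coupling to $\xi$.

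The heart of the argument brings in the Melnikov function. Since $\D H(x^\h(t))$ solves the adjoint of the homogeneous VE, as $\frac{\d}{\d t}\D H(x^\h)=\D^2H(x^\h)J\D H(x^\h)$ and $J^{\mathsf T}=-J$, the pairing $w:=\D H(x^\h(t))\cdot\xi$ satisfies $\dot w=\D H(x^\h)\cdot r(t)$. Choosing the solution of the lower block with $\eta\equiv1$ and $(\mu_j,\nu_j)=(\cos(j\omega t+j\theta),\sin(j\omega t+j\theta))$ makes $r(t)=g(x^\h(t),\omega t+\theta)$, so that integrating over $\Rset$ reproduces exactly $M(\theta)$; equivalently, $w$ is built by adjoining antiderivatives governed by the modal integrals $M_j=\int_{-\infty}^\infty \D H(x^\h)\cdot\hat g_j(x^\h)\,e^{ij\omega t}\,\d t$, and $M$ is nonconstant precisely when some $M_j\neq0$ with $j\neq0$. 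I would then reduce to a single such mode: the Galois group of the full VE admits as a quotient the Galois group of the three-dimensional subquotient in $(w,\mu_j,\nu_j)$ for one index $j$ with $M_j\neq0$, so it is enough to make the latter non-commutative. Over the base differential field generated by the complexified coefficients of the VE, the exponentials $e^{\pm ij\omega t}$ generate a torus $\mathbb{G}_m$, while adjoining $w$ is an integral ($\mathbb{G}_a$) extension; the obstruction $M_j\neq0$ says that $\D H(x^\h)\cdot\hat g_j(x^\h)\,e^{ij\omega t}$ is not an exact derivative inside the field extended only by the exponential, i.e.\ the linear equation $\dot h_j+ij\omega h_j=\D H(x^\h)\cdot\hat g_j(x^\h)$ has no solution $h_j$ in the base field. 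Hence the unipotent generator does not commute with the torus, the subquotient realizes the non-abelian affine group $\bigl(\begin{smallmatrix}\ast&\ast\\0&1\end{smallmatrix}\bigr)$, and its identity component is non-commutative. Ayoul–Zung then yields the theorem.

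The hard part will be the final step, converting the analytic non-vanishing of $M_j$ into the algebraic non-commutativity of $G^0$. This requires pinning down the correct base field -- the real/meromorphic functions on the complexified heteroclinic orbit together with the exponentials -- and continuing $x^\h(t)$ into complex time, where the relevant singularities realizing the Galois group appear: the complex poles of $x^\h$ and the irregular points at $t=\pm\infty$ coming from the two distinct saddles $x_\pm$. One must verify rigorously that $M_j\neq0$ genuinely obstructs solvability of $\dot h_j+ij\omega h_j=\D H(x^\h)\cdot\hat g_j(x^\h)$ in the base field -- so that no meromorphic $h_j$ removes the integral and a true logarithmic (unipotent) extension survives, detected in examples as a nonzero residue of $\D H(x^\h)\cdot\hat g_j(x^\h)\,e^{ij\omega t}$. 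Finally, the real structure must be tracked throughout to obtain real-meromorphic, rather than merely complex-meromorphic, nonintegrability, which together with the two-saddle heteroclinic geometry is the new feature relative to \cite{M02}.
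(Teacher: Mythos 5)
Your top-level architecture is the same as the paper's (complexification, Ayoul--Zung, reduction to a single Fourier mode as in Lemma~\ref{lem:3b}), but your end-game is genuinely leaner than the paper's. The paper constructs the explicit fundamental matrices \eqref{eqn:Psi} and \eqref{eqn:Phi}, the asymptotic connection data $B_\pm,b_\pm$ of Lemma~\ref{lem:3d}, and the two monodromy matrices $M_\pm$ around the saddles (Lemma~\ref{lem:4a}), deriving the contradiction \eqref{eqn:4a} from their failure to commute. You instead project the $\xi$-block of \eqref{eqn:vehl} by the adjoint solution $\D H(x^\h(t))$ onto the scalar $w=\D H(x^\h)\cdot\xi$, obtaining the $2\times 2$ subquotient $\dot w=\phi\,\eta_\ell$, $\dot\eta_\ell=i\ell\omega\eta_\ell$ with $\phi=\D H(x^\h)\cdot\hat g_\ell(x^\h)$, and you correctly characterize commutativity of its Galois group by solvability of $\dot h+i\ell\omega h=\phi$ in the coefficient field. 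This reduction is legitimate (the Galois group of a subquotient is a quotient of a subgroup of the full group, and non-commutativity of identity components lifts), and since the local multiplier $e^{2\pi\ell\omega/\lambda_+}$ is a positive real number different from $1$, hence of infinite order, the torus part is Zariski-dense and your full-affine-group conclusion would indeed kill commutativity of $\G^0$ --- \emph{provided} the solvability obstruction is actually proved.

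That proviso is the genuine gap, and the hints you give for closing it point the wrong way. The coefficient field must be the meromorphic functions on a curve compactified exactly as in the paper: $\Gamma=x^\h(U)\cup W_+^\s\cup W_-^\u$, with the saddles adjoined as points $0_\pm$ through the local holomorphic stable/unstable manifolds. In the coordinates $z_\pm$ the singularities at $0_\pm$ are \emph{regular} (Fuchsian, since $\d/\d t=h_\pm(z_\pm)\,\d/\d z_\pm$ with $h_\pm$ having a simple zero), not irregular as you state; and the obstruction is not a residue at an interior complex pole of $x^\h$ --- that is the contour-integral mechanism of \cite{M02}, which this paper deliberately avoids --- but the connection quantity $\hat M_\ell=m_+-m_-$ of \eqref{eqn:m}, the difference of the limits of $W=\int\phi\,e^{i\ell\omega t}\d t$ at the two ends. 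The missing argument is short once this is set up: if some $h$ meromorphic on $\Gamma$ solved $\dot h+i\ell\omega h=\phi$, then $W=c+h\,e^{i\ell\omega t}$ for a constant $c$; along the real orbit $e^{i\ell\omega t}$ has unit modulus and non-convergent argument as $t\to\pm\infty$ while $W\to m_\pm$, so meromorphy of $h$ at $0_\pm$ forces $h(0_\pm)=0$, whence $m_+=c=m_-$ and $\hat M_\ell=0$, a contradiction. Note that without adjoining \emph{both} equilibria to the curve the implication fails --- $\hat M_\ell\neq 0$ constrains nothing at a single end --- and this two-ends geometry is precisely what the paper encodes in the non-commuting monodromies $M_\pm$. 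Finally, the real-to-complex step should be stated in the correct direction: real-meromorphic integrability of \eqref{eqn:rsys} implies complex-meromorphic integrability of \eqref{eqn:csys} (the paper's Lemma~\ref{lem:3a}), which is what lets the complex Galoisian obstruction yield the real-meromorphic conclusion; ``tracking the real structure'' is not otherwise needed.
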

%\newpage

\begin{rmk}\
\label{rmk:1b}
\begin{enumerate}
\setlength{\leftskip}{-1.8em}
\item[(i)]
The homoclinic Melnikov method {\rm\cite{GH83b,M63,W03}}
 does not yield such a conclusion as in Theorem~$\ref{thm:main}$
 even if the Melnikov function $M(\theta)$ has a simple zero,
 since it says nothing about the nonexistence of commutative vector fields
 although no additional \emph{real-analytic} first integral exists
 $($see, e.g., Theorem~$3.10$ of {\rm\cite{M73})}.
\item[(ii)]
In {\rm\cite{Y03b,YY19}} the nonintegrability of two-degree-freedom Hamiltonian systems of the form
\begin{align*}
&
\dot{x}=J\D H(x)+J\D_x H_1(x,u_1,v_1),\\
&
\dot{u}_1=-\omega v_1-\D_{v_1}H_1(x,u_1,v_1),\\
&
\dot{v}_1=\omega u_1+ \D_{u_1}H_1(x,u_1,v_1),
\end{align*}
with the Hamiltonian $H(x)+\tfrac{1}{2}\omega^2(u_1^2+v_1^2)+H_1(x,u_1,u_2)$,
 where $H_1(x,0,0)\equiv 0$,
 near homoclinic or heteroclinic orbits on the $x$-plane were studied.
Theorem~$\ref{thm:main}$ states that even simpler systems
  having no coupling terms in their $(u_1,v_1)$-components may be nonintegrable.
\end{enumerate}
\end{rmk}

We emphasize that
 Theorem~$\ref{thm:main}$ is immediately applicable
 when $x^\h(t)$ is a heteroclinic orbit,
 while the previous results of \cite{M02,MY22a} are not.
Moreover, it guarantees the \emph{real-meromorphic} nonintegrability of \eqref{eqn:rsys}
 when $M(\theta)$ is not constant.
Our proof of Theorem~\ref{thm:main}
 is also based on the generalized version of the Morales-Ramis theory,
 but the approach is very different from \cite{M02}.
The arguments used here are rather similar to those of \cite{MP99,Y03b,YY19},
 in which the Liouville-integrability of two-degree-of-freedom Hamiltonian systems
 near homo- or heteroclinic orbits to saddle-center equilibria was discussed,
 although very different situations were considered there.
Similar arguments were also used in different contexts in \cite{BY12a,Y22a,Y22b,YY17}
 (see Sections~3 and 4).
We illustrate our theory for two periodically forced Duffing oscillators:
\begin{equation}
\dot{x}_1=x_2,\quad
\dot{x}_2=x_1-x_1^3+\epsilon(\beta\cos\omega t-\delta x_2)
\label{eqn:ex1}
\end{equation}
and
\begin{equation}
\dot{x}_1=x_2,\quad
\dot{x}_2=-x_1+x_1^3+\epsilon(\beta\cos\omega t-\delta x_2);
\label{eqn:ex2}
\end{equation}
and for a two-dimensional system with periodic forcing:
\begin{equation}
\dot{x}_1=x_1-x_1^2,\quad
\dot{x}_2=-x_2+2x_1x_2+\epsilon(\beta\cos\omega t-\delta x_2),
\label{eqn:ex3}
\end{equation}
where $\beta,\omega>0$ and $\delta\ge 0$ are constants.
The unperturbed system has a pair of homoclinic orbits
\begin{equation}
x^\h(t)=(\sqrt{2}\sech t,-\sqrt{2}\sech t \tanh t)
\label{eqn:ex1h}
\end{equation}
to the equilibrium at $x=0$ for \eqref{eqn:ex1}, a pair of heteroclinic orbits
\begin{equation}
x^\h(t)=\left(\pm\tanh(t/\sqrt{2}),\pm\sech^2(t/\sqrt{2})/\sqrt{2}\right)
\label{eqn:ex2h}
\end{equation}
connecting two equilibria $x=(-1,0)$ and $(1,0)$ for \eqref{eqn:ex2},
 and a heteroclinic orbit
\begin{equation}
x^\h(t)=\left(\frac{e^t}{e^t+1},0\right)
\label{eqn:ex2h}
\end{equation}
connecting two equilibria $x=(0,0)$ and $(1,0)$ for \eqref{eqn:ex3}.
In particular, there exists a heteroclinic orbit but no periodic orbit
 in the unperturbed system for \eqref{eqn:ex3}.
See Fig.~\ref{fig:5c} in Section~5 for the phase portrait of \eqref{eqn:ex3} with $\epsilon=0$.
The techniques of \cite{MY22a,Y22c,Y22d} can be applied to the first two examples
 since there are one-parameter families of periodic orbits in the perturbed systems
 even near the homo- and heteroclinic orbits (see also \cite{MY22b}), 
 but cannot to the third example.

The outline of this paper is as follows:
In Section~2 we briefly review the fundamental theories,
 the differential Galois theory and generalized Morales-Ramis theory.
We provide preliminary results in Section~3,
 and prove Theorem~\ref{thm:main} in Section~4.
Finally we  present the two examples in Section~5.

% **********************************************************
% Section 2
% **********************************************************

\section{Fundamental Theories}
In this section we provide outlines of 
 the differential Galois theory and generalized  Morales theory.

\subsection{Differential Galois theory}

We begin with the differential Galois theory for linear differential equations.
See the textbooks \cite{CH11,PS03} for more details on the theory.

Consider a linear system of differential equations
\begin{equation}
\dot{x}=Ax,\quad A\in\mathrm{gl}(n,\Kset),
\label{eqn:lsys}
\end{equation}
where $\Kset$ is a differential field and
 $\mathrm{gl}(n,\Kset)$ denotes the ring of $n\times n$ matrices
 with entries in $\Kset$.
Here a \emph{differential field} is a field
 endowed with a derivation $\partial$,
 which is an additive endomorphism satisfying the Leibniz rule
 and represented by the overdot in \eqref{eqn:lsys}.
The set $\mathrm{C}_{\Kset}$ of elements of $\Kset$ for which $\partial$ vanishes
 is a subfield of $\Kset$
 and called the \emph{field of constants of $\Kset$}.
In our application of the theory in this paper,
 the field of constants is $\Cset$, which is algebraically closed.

A \emph{differential field extension} $\Lset\supset \Kset$
 is a field extension such that $\Lset$ is also a differential field
 and the derivations on $\Lset$ and $\Kset$ coincide on $\Kset$.
A differential field extension $\Lset\supset \Kset$
 satisfying the following two conditions is called a \emph{Picard-Vessiot extension}
 for \eqref{eqn:lsys}:
\begin{enumerate}
%\item[\bf (PV1)]
%There exists a fundamental matrix $\Xi(x)$ of \eqref{LinearSystem} with entries in $\Lset$;
\item[\bf (PV1)]
The field $\Lset$ is generated by $\Kset$
 and elements of a fundamental matrix of \eqref{eqn:lsys};
\item[\bf (PV2)]
The fields of constants for $\Lset$ and $\Kset$ coincide.
\end{enumerate}
The system \eqref{eqn:lsys}
 admits a Picard-Vessiot extension which is unique up to isomorphism.

We now fix a Picard-Vessiot extension $\Lset\supset \Kset$
 and fundamental matrix $\Phi$ with entries in $\Lset$
 for \eqref{eqn:lsys}.
Let $\sigma$ be a \emph{$\Kset$-automorphism} of $\Lset$,
 which is a field automorphism of $\Lset$
 that commutes with the derivation of $\Lset$
 and leaves $\Kset$ pointwise fixed.
Obviously, $\sigma(\Phi)$ is also a fundamental matrix of \eqref{eqn:lsys}
 and consequently there is a matrix $M_\sigma$ with constant entries
 such that $\sigma(\Phi)=\Phi M_\sigma$.
This relation gives a faithful representation
 of the group of $\Kset$-automorphisms of $\Lset$
 on the general linear group as
\[
R\colon \mathrm{Aut}_{\Kset}(\Lset)\to\GL(n,\mathrm{C}_{\Lset}),
\quad \sigma\mapsto M_{\sigma},
\]
where $\GL(n,\mathrm{C}_{\Lset})$
is the group of $n\times n$ invertible matrices with entries in $\mathrm{C}_{\Lset}$.
The image of $R$
 is a linear algebraic subgroup of $\GL(n,\mathrm{C}_{\Lset})$,
 which is called the \emph{differential Galois group} of \eqref{eqn:lsys}
 and often denoted by $\mathrm{Gal}(\Lset/\Kset)$.
This representation is not unique
 and depends on the choice of the fundamental matrix $\Phi$,
 but a different fundamental matrix only gives rise to a conjugated representation.
Thus, the differential Galois group is unique up to conjugation
 as an algebraic subgroup of the general linear group.

Let $\G\subset\GL(n,\mathrm{C}_{\Lset})$ be an algebraic group.
Then it contains a unique maximal connected algebraic subgroup $\G^0$,
 which is called the \emph{connected component of the identity}
 or \emph{identity component}.
The identity component $\G^0\subset\G$ is
% a normal algebraic subgroup and
 the smallest subgroup of finite index, i.e., the quotient group $\G/\G^0$ is finite.
 
%Let the linear system \eqref{eqn:lsys} be defined on a Riemann surface $\C$.

Let $\Kset$ be the field of meromorphic functions on a Riemann surface $\C$,
 and consider the linear system \eqref{eqn:lsys}.
A point $\bar{t}\in\C$ is called a \emph{singular point} if $A$ is not bounded at $x$.
A singular point $\bar{t}$ is called \emph{regular}
 if for any sector $\kappa_1<\arg(t-\bar{t})<\kappa_2$ with $\kappa_1<\kappa_2$
 there exists a fundamental matrix $\Phi(t)=(\phi_{ij}(t))$
 such that for some $C>0$ and integer $N$
 $|\phi_{ij}|<C|t-\bar{t}|^N$ as $t\to\bar{t}$ in the sector;
 otherwise it is called \emph{irregular}.
Let $t_0\in\C$ be a nonsingular point for \eqref{eqn:lsys}.
We prolong the fundamental matrix $\Phi(t)$ analytically
 along any loop $\gamma$ based at $t_0$ and containing no singular point,
 and obtain another fundamental matrix $\gamma*\Phi(t)$.
So there exists a constant nonsingular matrix $M_{[\gamma]}$ such that
\[
\gamma*\Phi(t) = \Phi(t)M_{[\gamma]}.
\]
The matrix $M_{[\gamma]}$ depends on the homotopy class $[\gamma]$
 of the loop $\gamma$
 and is called the \emph{monodromy matrix} of $[\gamma]$.
Let $\Lset$ be a Picard-Vessiot extension of \eqref{eqn:lsys}
 and let $\mathrm{Gal}(\Lset/\Kset)$ be the differential Galois group.
Since analytic continuation commutes with differentiation,
 we have $M_{[\gamma]}\in\mathrm{Gal}(\Lset/\Kset)$.

\subsection{Generalized Morales-Ramis theory}
We next briefly review the Morales-Ramis theory for the general system \eqref{eqn:gsys}
 in a necessary setting.
See \cite{AZ10,M99,MR01} for more details on the theory.

Consider the general system \eqref{eqn:gsys}.
Let $x=\phi(t)$ be its nonconstant particular solution.
The VE of \eqref{eqn:gsys} along $x=\phi(t)$ is given by
\begin{equation}
\dot{\xi} = \D f(\phi(t))\xi,\quad \xi \in \Cset^n.
\label{eqn:gve}
\end{equation}
Let $\C$ be a curve given by $x=\phi(t)$.
We take the meromorphic function field on $\C$
 as the coefficient field $\Kset$ of \eqref{eqn:gve}.
Using arguments given by Morales-Ruiz and Ramis \cite{M99,MR01}
 and Ayoul and Zung \cite{AZ10}, we have the following result.

\begin{thm}
\label{thm:MR}
Let $\G$ be the differential Galois group of \eqref{eqn:gve}.
%Suppose that the VE \eqref{eqn:gve} has irregular singularity at infinities in the phase space.
If the system~\eqref{eqn:gsys} is meromorphically integrable near $\C$,
 then the identity component $\G^0$ of $\G$ is commutative.
\end{thm}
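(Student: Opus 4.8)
The plan is to deduce Theorem~\ref{thm:MR} from the classical Morales--Ramis theorem for Hamiltonian systems \cite{M99,MR01} by means of the cotangent lift of Ayoul and Zung \cite{AZ10}, which turns an arbitrary Bogoyavlenskij-integrable system into a Liouville-integrable Hamiltonian one. Concretely, assume that \eqref{eqn:gsys} is meromorphically $(q,n-q)$-integrable near $\C$, with commuting vector fields $f_1(=f),\ldots,f_q$ and first integrals $F_1,\ldots,F_{n-q}$ as in Definition~\ref{dfn:1a}. On the cotangent space with coordinates $(x,p)\in\Cset^n\times\Cset^n$ carrying the canonical symplectic structure, I would introduce the Hamiltonian $\mathcal{H}(x,p)=\langle p,f(x)\rangle$, whose Hamiltonian vector field is $\dot{x}=f(x)$, $\dot{p}=-\D f(x)^\T p$. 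The locus $p=0$ is invariant and projects onto solutions of \eqref{eqn:gsys}; in particular $(x,p)=(\phi(t),0)$ is a solution of the lifted system lying over the curve $x=\phi(t)$.

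Next I would check that the lifted system is meromorphically Liouville-integrable near this lifted curve. The $n$ functions $P_j(x,p)=\langle p,f_j(x)\rangle$ for $j=1,\ldots,q$ and $F_k(x)$ for $k=1,\ldots,n-q$ are meromorphic near $(\phi(t),0)$ and pairwise in involution: $\{P_j,P_l\}=\langle p,[f_j,f_l]\rangle=0$ by commutativity, $\{F_k,F_l\}=0$ since these depend on $x$ alone, and $\{P_j,F_k\}=\D F_k\cdot f_j=0$ since each $F_k$ is a first integral of $f_j$. Their differentials are linearly independent almost everywhere, because the $\D F_k$ are already independent in the $x$-variables while the $P_j$ contribute the $p$-differentials $\langle\d p,f_j\rangle$, which are independent thanks to the almost-everywhere independence of $f_1,\ldots,f_q$. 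This furnishes $n$ independent involutive integrals on the $2n$-dimensional symplectic space. Finally, linearizing the lifted system along $(\phi(t),0)$ yields the block-diagonal variational equation $\dot{\xi}=\D f(\phi(t))\xi$, $\dot{\eta}=-\D f(\phi(t))^\T\eta$, i.e. the direct sum of \eqref{eqn:gve} and its adjoint. Since $\dot{\eta}=-\D f(\phi(t))^\T\eta$ is solved by the contragredient $(\Phi^{-1})^\T$ of a fundamental matrix $\Phi$ of \eqref{eqn:gve}, both equations share the same Picard--Vessiot extension, and an automorphism $\sigma$ acts on the sum by $M_\sigma\mapsto\mathrm{diag}(M_\sigma,(M_\sigma^{-1})^\T)$; this is a faithful representation whose image has identity component isomorphic to $\G^0$.

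With these facts in place the conclusion is immediate: the classical Morales--Ramis theorem \cite{M99,MR01}, applied to the meromorphically Liouville-integrable Hamiltonian lift, forces the identity component of the differential Galois group of its variational equation to be commutative; by the isomorphism of the previous step this identity component coincides with $\G^0$, so $\G^0$ is commutative. The main obstacle I anticipate is not the formal computation but the bookkeeping needed to remain inside the meromorphic category near $\C$: one must verify that the lifted integrals stay meromorphic in a full neighborhood of the lifted phase curve and that their functional independence genuinely survives the lift rather than degenerating along $p=0$, and one must make precise the transfer of the identity component through the decomposition $\eqref{eqn:gve}\oplus\eqref{eqn:gve}^\ast$. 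These are exactly the points that the construction of Ayoul and Zung \cite{AZ10} is designed to address, so I would invoke their framework to discharge them rather than reprove the lift from scratch.
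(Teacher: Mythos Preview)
Your sketch is correct and is precisely the Ayoul--Zung cotangent-lift argument that the paper itself invokes: the paper does not supply an independent proof of Theorem~\ref{thm:MR} but simply states it as a consequence of \cite{M99,MR01} together with \cite{AZ10}. Your outline---lift $f$ to the Hamiltonian $\mathcal{H}(x,p)=\langle p,f(x)\rangle$, promote the commuting vector fields and first integrals to $n$ involutive integrals $\langle p,f_j\rangle$ and $F_k$, observe that the variational equation along $(\phi(t),0)$ splits as \eqref{eqn:gve} plus its adjoint with the same Picard--Vessiot field, and then apply the Hamiltonian Morales--Ramis theorem---is exactly the content of \cite{AZ10}, so there is nothing to compare.
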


By contraposition of Theorem~\ref{thm:MR}, if $\G^0$ is not commutative,
 then the system~\eqref{eqn:gsys} is meromorphically nonintegrable near $\C$.

% **********************************************************
% Section 3
% **********************************************************

\section{Preliminaries}

In this section we give preliminary results for the proof of Theorem~\ref{thm:main}.
Letting $y_0=\epsilon$ and
\[
y_j=\tfrac{1}{2}(u_j+i v_j),\quad
y_{-j}=\tfrac{1}{2}(u_j-i v_j),\quad
j=1,\ldots,N,
\]
we rewrite \eqref{eqn:rsys} as
\begin{equation}
\dot{x}=J\D H(x)+\sum_{j=-N}^N\hat{g}_j(x)y_j,\quad
\dot{y}_j=ij\omega y_j,\quad
j=-N,\ldots,N.
\label{eqn:csys}
\end{equation}
We easily see that
 $y_j=\tfrac{1}{2}\epsilon e^{ij\omega t}$ is a solution to the $y_j$-component of \eqref{eqn:csys}
 for $j\in\{-N,\ldots,N\}\setminus\{0\}$.
Let $y=(y_{-N},\ldots,y_N)$.

\begin{lem}
\label{lem:3a}
If the system \eqref{eqn:rsys} is real-meromorphically integrable
 near $\hat{\Gamma}$ in $\Rset^{2N+3}$,
 then the system \eqref{eqn:csys} is complex-meromorphically integrable near
\[
\tilde{\Gamma}
 =\{(x,y)=(x^\h(t),0)\in\Rset^2\times\Rset^{2N+1}\mid t\in\Rset\}\cup\{(x_\pm,0)\}
\]
in $\Cset^{2N+3}$.
\end{lem}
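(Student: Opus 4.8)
The plan is to establish Lemma~\ref{lem:3a} by showing that the two systems \eqref{eqn:rsys} and \eqref{eqn:csys} are related by an invertible (complex) linear change of coordinates, and that real-meromorphic integrability is preserved under complexification together with this linear change. The map between the systems is exactly the substitution $y_0=\epsilon$, $y_{\pm j}=\tfrac12(u_j\mp iv_j)$, which is a fixed invertible $\Cset$-linear transformation $L\colon(\epsilon,u,v)\mapsto y$ on the extra variables, leaving $x$ untouched. First I would verify explicitly that under $L$ the vector field of \eqref{eqn:rsys} is carried to the vector field of \eqref{eqn:csys}; this is the routine computation already implicit in the passage deriving \eqref{eqn:csys} from \eqref{eqn:rsys}, and I would simply note that $L$ conjugates one system to the other. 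I would also record that $L$ maps the real invariant set $\hat\Gamma$ into $\tilde\Gamma$ (indeed $u=v=0,\ \epsilon=0$ corresponds to $y=0$, and $x$ is unchanged), so a neighborhood of $\hat\Gamma$ in $\Rset^{2N+3}$ sits inside a neighborhood of $\tilde\Gamma$ after complexification.

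\textbf{Transport of the integrability data.} Assume \eqref{eqn:rsys} is real-meromorphically integrable near $\hat\Gamma$, so there exist commuting, almost-everywhere independent vector fields $f_1,\dots,f_q$ and functionally independent first integrals $F_1,\dots,F_{2N+3-q}$, all real-meromorphic on a real neighborhood $U$ of $\hat\Gamma$. The plan is to complexify: any function or vector-field component that is real-meromorphic on $U$ extends to a complex-meromorphic object on a complex neighborhood of $U$ in $\Cset^{2N+3}$ (a real-meromorphic function is locally a quotient of convergent real power series, which converge on a complex polydisc). I would then push these forward by $L$: set $\tilde F_k = F_k\circ L^{-1}$ and $\tilde f_j = L_\ast f_j = \D L\, (f_j\circ L^{-1})$. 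Since $L$ is a constant invertible linear map, $L_\ast$ preserves the Lie bracket, linear independence almost everywhere, and the first-integral relation $\D F_k\cdot f_j\equiv0$; likewise $L$ preserves meromorphy and functional independence of the $\D\tilde F_k$. Because $L$ conjugates the two vector fields, $\tilde f_1$ is precisely the vector field of \eqref{eqn:csys}, and the remaining $\tilde f_j$ commute with it. This yields the required $(q,\,2N+3-q)$-integrability data for \eqref{eqn:csys} on the image neighborhood, which contains $\tilde\Gamma$.

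\textbf{The main obstacle} I anticipate is purely bookkeeping about the \emph{domains} and the meaning of ``near $\hat\Gamma$'' versus ``near $\tilde\Gamma$'': one must be careful that complexifying the real-meromorphic data produces objects holomorphic/meromorphic on an \emph{open} complex neighborhood of $\tilde\Gamma$ (not merely on the real slice), and that the loci of linear dependence or of poles remain nowhere dense after complexification so that the almost-everywhere conditions in Definition~\ref{dfn:1a} survive. I would address this by invoking the identity theorem: the real varieties where $\D F_1\wedge\cdots\wedge\D F_{2N+3-q}$ or the $f_j$-Wronskian vanish are cut out by real-analytic functions whose complexifications are not identically zero (they are nonzero on a real-positive-measure set), hence their complex zero sets are proper analytic subsets and thus nowhere dense. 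With this, the transported data satisfy conditions (i) and (ii) of Definition~\ref{dfn:1a} on the complex neighborhood, establishing complex-meromorphic integrability of \eqref{eqn:csys} near $\tilde\Gamma$ and completing the proof.
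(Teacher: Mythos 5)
Your proposal is correct and takes essentially the same route as the paper: complexify the real-meromorphic first integrals and commutative vector fields, then transport them through the constant complex-linear substitution $y_0=\epsilon$, $y_{\pm j}=\tfrac{1}{2}(u_j\mp iv_j)$ --- precisely what the paper's one-line proof does when it forms $F(x,\tilde{u},\tilde{v},y_0)$ and $f(x,\tilde{u},\tilde{v},\epsilon)$ with $\tilde{u}=(y_1+y_{-1},\ldots,y_N+y_{-N})$ and $i\tilde{v}=(y_1-y_{-1},\ldots,y_N-y_{-N})$. Your additional bookkeeping (verifying that $L$ conjugates the vector fields and that the almost-everywhere independence conditions of Definition~\ref{dfn:1a} survive complexification via the identity theorem) merely makes explicit details the paper leaves implicit.
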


\begin{proof}
If $F(x,u,v,\epsilon)$ and $f(x,u,v,\epsilon)$ are, respectively,
 a first integral and commutative vector field for \eqref{eqn:rsys}
 near $\hat{\Gamma}$ in $\Rset^{2N+3}$,
 then so are they for \eqref{eqn:rsys} near $\hat{\Gamma}$ in $\Cset^{2N+3}$,
 and consequently so are $F(x,\tilde{u},\tilde{v},y_0)$ and $f(x,\tilde{u},\tilde{v},\epsilon)$
 for \eqref{eqn:csys} near $\tilde{\Gamma}$,
 where $\tilde{u}=(y_1+y_{-1},\ldots,y_N+y_{-N})$ and $i\tilde{v}=(y_1-y_{-1},\ldots,y_N-y_{-N})$.
Thus, we obtain the desired result.
\end{proof}

We easily see that the Melnikov function $M(\theta)$ is not constant if and only if
\begin{equation}
\hat{M}_j:=\int_{-\infty}^\infty\D H(x^\h(t))\cdot\hat{g}_j(x^\h(t))e^{ij\omega t}\d t\neq 0
\label{eqn:Mj}
\end{equation}
for one of $j=\{-N,\ldots,N\}\setminus\{0\}$.
By Lemma~\ref{lem:3a}, to prove Theorem~\ref{thm:main},
 we only have to show that
 the system \eqref{eqn:csys} is complex-meromorphically integrable near $\tilde{\Gamma}$
 if $\hat{M}_j\neq 0$ for some $j\neq 0$.
%Since $\hat{g}_j(x)^\ast=\hat{g}_{-j}(x)$, we have $\hat{M}_j^\ast=\hat{M}_{-j}$.
%In particular, if $\hat{M}_j\neq 0$, then $\hat{M}_{-j}\neq 0$.

We apply Theorem~\ref{thm:MR}
 to the nonconstant particular solution $(x,y)=(x^\h(t),0)$ in \eqref{eqn:csys}.
The VE of \eqref{eqn:csys} along the solution is given by
\begin{equation}
\dot{\xi}=J\D^2H(x^\h(t))\xi+\sum_{j=-N}^N\hat{g}_j(x^\h(t))\eta_j,\quad
\dot{\eta}_j=ij\omega\eta_j,\quad
j=-N,\ldots,N.
\label{eqn:veh}
\end{equation}
Obviously, we have the following.

\begin{lem}
\label{lem:3b}
If the differential Galois group of \eqref{eqn:veh} is commutative,
 then so are those of its $(\xi,\eta_\ell)$-components with $\eta_j=0$, $j\neq\ell$,
\begin{equation}
\dot{\xi}=J\D^2H(x^\h(t))\xi+\hat{g}_\ell(x^\h(t))\eta_\ell,\quad
\dot{\eta}_\ell=i\ell\omega\eta_\ell,
\label{eqn:vehl}
\end{equation}
for $\ell=-N,\ldots,N$.
\end{lem}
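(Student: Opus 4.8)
The plan is to show that the differential Galois group $\G_\ell$ of the reduced system \eqref{eqn:vehl} is a \emph{quotient} of the differential Galois group $\G$ of the full variational equation \eqref{eqn:veh}. Since every quotient of a commutative group is commutative, this immediately yields the lemma. Concretely, I would realize \eqref{eqn:vehl} as the restriction of \eqref{eqn:veh} to the invariant coordinate subspace $\{\eta_j=0,\ j\neq\ell\}$, deduce a containment of the associated Picard-Vessiot fields, and then invoke the standard surjection of Galois groups attached to a normal intermediate extension.

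First I would exploit the block structure of \eqref{eqn:veh}. The $\eta_j$-equations are mutually decoupled with $\eta_j(t)=\eta_j(0)e^{ij\omega t}$, while the $\xi$-equation is linear in $\xi$ and forced by the $\eta_j$. Hence a fundamental matrix $\Phi(t)$ of \eqref{eqn:veh} can be chosen with columns of two types: two columns of the form $(\Xi(t),0)$, where $\Xi(t)$ is a fundamental matrix of the homogeneous part $\dot{\xi}=J\D^2H(x^\h(t))\xi$; and, for each $j\in\{-N,\ldots,N\}$, one column whose $\eta$-block equals $e^{ij\omega t}$ in the $j$-th slot and vanishes elsewhere, and whose $\xi$-block is a particular solution $\xi_j(t)$ of $\dot{\xi}=J\D^2H(x^\h(t))\xi+\hat{g}_j(x^\h(t))e^{ij\omega t}$.

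Then I would read off a fundamental matrix of the reduced system \eqref{eqn:vehl} directly from this construction: its columns are $(\Xi(t),0)$ and $(\xi_\ell(t),e^{i\ell\omega t})$, so each of its entries lies in the field generated over $\Kset$ by the entries of $\Xi(t)$, $\xi_\ell(t)$ and $e^{i\ell\omega t}$, all of which are already entries of $\Phi(t)$. Consequently the Picard-Vessiot field $\Lset_\ell$ of \eqref{eqn:vehl} satisfies $\Lset_\ell\subseteq\Lset$, where $\Lset$ is the Picard-Vessiot field of \eqref{eqn:veh}. Because $\Lset_\ell/\Kset$ is itself a Picard-Vessiot (hence normal) extension, restriction of $\Kset$-automorphisms from $\Lset$ to $\Lset_\ell$ is well defined and furnishes a surjective morphism of algebraic groups $\G=\mathrm{Gal}(\Lset/\Kset)\twoheadrightarrow\mathrm{Gal}(\Lset_\ell/\Kset)=\G_\ell$; commutativity of $\G$ therefore descends to $\G_\ell$.

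The only point requiring genuine verification is the containment $\Lset_\ell\subseteq\Lset$, and this is exactly what the compatible choice of fundamental matrices via the block structure above secures; the surjectivity and well-definedness of the restriction map are formal consequences of the normality of Picard-Vessiot extensions, so I expect no real obstacle beyond organizing this bookkeeping cleanly.
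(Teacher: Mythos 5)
Your argument is correct: the reduced system \eqref{eqn:vehl} is the restriction of \eqref{eqn:veh} to the invariant subspace $\{\eta_j=0,\ j\neq\ell\}$, which is defined over the coefficient field, so the adapted fundamental matrix you build exhibits the Picard-Vessiot field $\Lset_\ell$ of \eqref{eqn:vehl} as a differential subfield of the Picard-Vessiot field $\Lset$ of \eqref{eqn:veh} (constants do not grow since $\mathrm{C}_{\Lset_\ell}\subseteq\mathrm{C}_{\Lset}=\Cset$), and normality of $\Lset_\ell/\Kset$ gives the surjection $\mathrm{Gal}(\Lset/\Kset)\twoheadrightarrow\mathrm{Gal}(\Lset_\ell/\Kset)$, so commutativity passes to the quotient. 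The paper gives no proof at all --- the lemma is introduced with ``Obviously, we have the following'' --- and your quotient-of-Galois-groups argument is precisely the standard justification the author leaves implicit, so you have taken essentially the paper's (unstated) route and merely supplied the bookkeeping.
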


Based on Theorem~\ref{thm:MR} and Lemmas~\ref{lem:3a} and \ref{lem:3b},
 we only have to show that the differential Galois group of \eqref{eqn:vehl} is not commutative
 if $\hat{M}_\ell\neq 0$ for some $\ell\neq 0$,
 to prove Theorem~\ref{thm:main}.

Assume that $\D_{x_2}H(x^\h(t))\not\equiv 0$.
Let
\begin{equation}
X(t)=
\begin{pmatrix}
J\D H(x^\h(t)) & \chi(t)J\D H(x^\h(t))
\end{pmatrix}
+\begin{pmatrix}
0 & 0\\
0 & \D_{x_2}H(x^\h(t))^{-1}
\end{pmatrix},
\label{eqn:X}
\end{equation}
where $\chi(t)$ is a primitive function of $\D_{x_2}^2H(x^\h(t))/\D_{x_2}H(x^\h(t))^2$:
\[
\chi(t)=\int\frac{\D_{x_2}^2H(x^\h(t))}{\D_{x_2}H(x^\h(t))^2}\d t.
\]
 
\begin{lem}
\label{lem:3c}
$X(t)$ is a fundamental matrix of the $\xi$--component of \eqref{eqn:vehl} with $\eta_\ell=0$.
\end{lem}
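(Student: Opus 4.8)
The plan is to identify the $\xi$--component of \eqref{eqn:vehl} with $\eta_\ell=0$ as exactly the variational equation of the unperturbed Hamiltonian system \eqref{eqn:sys0} along $x^\h(t)$, namely $\dot\xi=J\D^2H(x^\h(t))\xi$ with $\xi\in\Cset^2$, and then to show that the two columns of $X(t)$ are linearly independent solutions of it. Since this is a two-dimensional linear system, two independent solutions constitute a fundamental matrix, so it suffices to verify (a) that each column solves the equation and (b) that $\det X(t)\neq 0$.

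For (a), the first column $\phi(t):=J\D H(x^\h(t))$ is the tangent field $\dot x^\h(t)$, and it solves the equation by the usual argument: differentiating the orbit identity $\dot x^\h=J\D H(x^\h)$ in $t$ gives $\ddot x^\h=J\D^2H(x^\h)\dot x^\h$, i.e. $\dot\phi=J\D^2H(x^\h)\phi$. The second column is $\psi(t):=\chi(t)\phi(t)+e(t)$ with $e(t)=(0,\ \D_{x_2}H(x^\h(t))^{-1})^\top$. Imposing $\dot\psi=J\D^2H(x^\h)\psi$ and using $\dot\phi=J\D^2H(x^\h)\phi$, the terms $\chi J\D^2H(x^\h)\phi$ cancel and the requirement collapses to
\[
\dot e-J\D^2H(x^\h(t))e=-\dot\chi\,\phi .
\]
Computing both sides componentwise with $\dot\chi=\D_{x_2}^2H(x^\h)/\D_{x_2}H(x^\h)^2$ and the chain-rule identity $\tfrac{\d}{\d t}\D_{x_2}H(x^\h)=\D_{x_1}\D_{x_2}H(x^\h)\,\D_{x_2}H(x^\h)-\D_{x_2}^2H(x^\h)\,\D_{x_1}H(x^\h)$, both sides turn out to equal $\bigl(-\D_{x_2}^2H(x^\h)/\D_{x_2}H(x^\h),\ \D_{x_2}^2H(x^\h)\,\D_{x_1}H(x^\h)/\D_{x_2}H(x^\h)^2\bigr)^\top$. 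This coincidence is precisely what pins down $\chi$ as a primitive of $\D_{x_2}^2H(x^\h)/\D_{x_2}H(x^\h)^2$; it is the one genuinely computational step, and it is where the hypothesis $\D_{x_2}H(x^\h(t))\not\equiv 0$ is needed, to make $e$ and $\chi$ well defined as meromorphic functions on the curve.

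For (b), a direct expansion of the $2\times 2$ determinant shows that the $\chi$--contributions cancel (being proportional to the first column) and leaves $\det X(t)=\D_{x_2}H(x^\h)\cdot\D_{x_2}H(x^\h)^{-1}=1$. As a consistency check, $J\D^2H$ is traceless, so by Abel's formula the Wronskian is constant, in agreement with $\det X\equiv 1$. Hence $X(t)$ is nonsingular for every $t$ and is therefore a fundamental matrix, as claimed. The only real obstacle is the algebraic verification of the second column in step (a); the first column and step (b) are routine.
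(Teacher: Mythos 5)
Your proof is correct: both columns solve $\dot\xi=J\D^2H(x^\h(t))\xi$, your chain-rule identity for $\tfrac{\d}{\d t}\D_{x_2}H(x^\h(t))$ is right, the common value of the two sides is as you state, and $\det X\equiv 1$. The route differs from the paper's in organization rather than in substance. The paper proceeds by reduction of order: it writes a fundamental matrix in the factored form $P(t)\Xi(t)$, where $P$ has columns $J\D H(x^\h(t))$ and $(0,\D_{x_2}H(x^\h(t))^{-1})^\T$, computes the gauge-transformed system $\dot{\Xi}=P^{-1}(J\D^2H(x^\h(t))P-\dot{P})\Xi$, finds it strictly upper triangular with top-right entry $\D_{x_2}^2H(x^\h(t))/\D_{x_2}H(x^\h(t))^2$, and integrates to obtain the unipotent $\Xi$ --- which is precisely where $\chi$ is \emph{derived} rather than verified. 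Your direct column-wise check performs the same cancellation: your identity $\dot{e}-J\D^2H(x^\h(t))e=-\dot{\chi}\,\phi$ is exactly the statement that the transformed coefficient matrix is strictly upper triangular with entry $\dot{\chi}$. The trade-off is that your version must establish independence separately (the Wronskian step, with the Abel-formula check a nice touch), whereas the paper gets nonsingularity for free from $\det P=1$ and $\Xi$ unipotent; conversely, your argument is more elementary and self-contained, while the paper's factorization explains where $\chi$ comes from and is the standard reduction of a planar variational equation along a known solution. One cosmetic caveat: $\det X\equiv 1$ holds away from the zeros of $\D_{x_2}H(x^\h(t))$, where $\chi$ and the $(2,2)$ entry have poles, so ``nonsingular for every $t$'' should be read meromorphically --- harmless here, since the lemma is applied over a field of meromorphic coefficients, and consistent with the paper's standing assumption $\D_{x_2}H(x^\h(t))\not\equiv 0$, whose role you correctly identify.
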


\begin{proof}
This statement was proven in \cite{M02}.
For the reader's convenience we briefly give the proof.
Let
\[
P(t)=\left(
J\D H(x^\h(t))\
\begin{array}{c}
0\\[1ex]
\D_{x_2}H(x^\h(t))^{-1}
\end{array}
\right)
\]
and let $P(t)\Xi(t)$ be a fundamental matrix of the linear system.
Then we have
\[
\dot{P}(t)\Xi(t)+P(t)\dot{\Xi}(t)=J\D^2H(x^\h(t))P(t)\Xi(t),
\]
so that
\begin{align*}
\dot{\Xi}(t)=&P(t)^{-1}(J\D^2H(x^\h(t))P(t)-\dot{P}(t))\Xi(t)\\
=&
\begin{pmatrix}
0 & \D_{x_2}^2H(x^\h(t))/\D_{x_2}H(x^\h(t))^2\\
0 & 0
\end{pmatrix}\Xi(t).
\end{align*}
Since
\[
\Xi(t)=
\begin{pmatrix}
1 & \chi(t)\\
0 & 1
\end{pmatrix}
\]
is a fundamental matrix of the above equation, we obtain the desired result.
\end{proof}

\begin{rmk}
The second term in the right hand side of \eqref{eqn:X} can be replaced by
\[
\begin{pmatrix}
0 & \D_{x_1}H(x^\h(t))^{-1}\\
0 & 1
\end{pmatrix}
\]
with
\[
\chi(t)=\int\frac{\D_{x_1}^2H(x^\h(t))}{\D_{x_1}H(x^\h(t))^2}\d t
\]
if $\D_{x_1}H(x^\h(t))\not\equiv 0$.
If $\D_{x_2}H(x^\h(t))\equiv 0$,
 then we can apply the arguments below by this replacement.
\end{rmk}

We see that $\chi(t)=O(e^{\pm2\lambda_\pm t})$ as $t\to\pm\infty$
 since $\D_{x_2}H(x^\h(t))=O(e^{\mp\lambda_\pm t})$ and $\D_{x_2}^2H(x^\h(t))=O(1)$.
Hence, as $t\to\pm\infty$,
 the second column of $X(t)$ goes to infinity exponentially at the rate of $\pm\lambda_\pm$
 while the first column goes to zero exponentially at the rate of $\mp\lambda_\pm$.
We write
\begin{equation}
\xi_\pm=\lim_{t\to\pm\infty}J\D H(x^\h(t))e^{\pm\lambda_\pm t},\quad
\chi_\pm=\lim_{t\to\pm\infty}\chi(t)e^{\mp 2\lambda_\pm t}.
\label{eqn:lim1}
\end{equation}
We also have
\begin{equation}
\chi_\pm=\frac{\D_{x_2}^2H(x_\pm)}{2\lambda_\pm\xi_{2\pm}^2}.
\label{eqn:chi}
\end{equation}
In particular, $\xi_\pm\neq 0$ and $\chi_\pm\neq 0$
 since $\det J\D^2H(x_\pm)=\det\D^2H(x_\pm)>0$ so that $\D_{x_2}^2H(x_\pm)\neq 0$.

Let
\begin{equation}
Y(t)=\int X(t)^{-1}\hat{g}_\ell(x^\h(t))e^{i\ell\omega t}\d t,
\label{eqn:Y}
\end{equation}
of which the first element is
\[
\int\biggl(\frac{\hat{g}_{\ell 1}(x^\h(t))}{\D_{x_2}H(x^\h(t))}
-\chi(t)\D H(x^\h(t))\cdot\hat{g}_\ell(x^\h(t))\biggr)e^{i\ell\omega t}\d t
\]
where $\hat{g}_{\ell j}(x)$ is the $j$th element of $\hat{g}_\ell(x)$ for $j=1,2$,
 and the second element is
\[
\int\D H(x^\h(t))\cdot\hat{g}_\ell(x^\h(t))e^{i\ell\omega t}\d t.
\]
We easily see that
\begin{equation}
\Psi(t)=\begin{pmatrix}
X(t) & X(t)Y(t)\\
0 & e^{i\ell\omega t}
\end{pmatrix}
\label{eqn:Psi}
\end{equation}
is a fundamental matrix of \eqref{eqn:vehl}.
Let
\[
\int\D H(x^\h(t))\cdot\hat{g}_\ell(x^\h(t))e^{i\ell\omega t}\d t\to m_\pm
\]
as $t\to\pm\infty$.
From \eqref{eqn:Mj} we see that
\begin{equation}
\hat{M}_\ell=m_+-m_-.
\label{eqn:m}
\end{equation}
Using \eqref{eqn:lim1}, we also have
\begin{equation}
X(t)Y(t)=
\begin{pmatrix}
\xi_{\pm 1}\chi_\pm m_\pm\\
(\xi_{\pm 2}\chi_\pm+\xi_{\pm 1}^{-1})m_\pm
\end{pmatrix}e^{\pm\lambda_\pm t}+O(1)
\label{eqn:lim2}
\end{equation}
as $t\to\pm\infty$.

We next consider the limits of \eqref{eqn:vehl} as $t\to\pm\infty$:
\begin{equation}
\dot{\xi}=J\D^2H(x_\pm)\xi+\hat{g}_\ell(x_\pm)\eta_\ell,\quad
\dot{\eta}_\ell=i\ell\omega\eta_\ell.
\label{eqn:ve0}
\end{equation}
Let $Q_\pm$ be nonsingular matrices such that
\[
Q_\pm^{-1}J\D^2H(x_\pm)Q_\pm
=\begin{pmatrix}
\pm\lambda_\pm & 0\\
0 &\mp\lambda_\pm
\end{pmatrix}.
\]
We easily see that
\begin{equation}
\Phi_\pm(t)=\begin{pmatrix}
X_\pm(t) & X_\pm(t)Y_\pm(t)\\
0 & e^{i\omega t}
\end{pmatrix}
\label{eqn:Phi}
\end{equation}
are fundamental matrices of \eqref{eqn:ve0}, where
\begin{equation}
X_\pm(t)=Q_\pm
\begin{pmatrix}
e^{\pm\lambda_\pm t} & 0\\
0 & e^{\mp\lambda_\pm t}
\end{pmatrix}Q_\pm^{-1}
\label{eqn:X0}
\end{equation}
and
\begin{align*}
Y_\pm(t)=& \int_0^t X_\pm(-\tau)\hat{g}_\ell(x_\pm)e^{i\ell\omega t}\d\tau\\
=& Q_\pm
\begin{pmatrix}
\displaystyle
\frac{e^{(\mp\lambda_\pm+i\ell\omega)t}-1}{\mp\lambda_\pm+i\ell\omega} & 0\\
0 & \displaystyle
\frac{e^{(\pm\lambda_\pm+i\ell\omega)t}-1}{\pm\lambda_\pm+i\ell\omega}
\end{pmatrix}
Q_\pm^{-1}\hat{g}_\ell(x_\pm).
\end{align*}
Moreover, $\Phi_\pm(0)=\id_2$.

\begin{lem}
\label{lem:3d}
There exist nonsingular $2\times 2$ matrices $B_\pm$ and two-dimensional vectors $b_\pm$
 such that
\[
\lim_{t\to\pm\infty}\Psi(t)
\begin{pmatrix}
B_\pm & B_\pm b_\pm\\[0.5ex]
0 & 1
\end{pmatrix}
\Phi_\pm(t)^{-1}=\id_3,
\]
where
\begin{equation}
\begin{split}
&
\lim_{t\to\pm\infty} X(t)B_\pm X_\pm(-t)=\id_2,\\
&
\lim_{t\to\pm\infty}\left(X_\pm(t)(Y_\pm(t)-b_\pm)-X(t)Y(t)\right)=0.
\end{split}
\label{eqn:lem3d}
\end{equation}
\end{lem}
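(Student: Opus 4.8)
The plan is to read the asserted limit as a matching (or connection) statement between the fundamental matrix $\Psi(t)$ of the full variational equation \eqref{eqn:vehl} and the fundamental matrices $\Phi_\pm(t)$ of its constant-coefficient limits \eqref{eqn:ve0}. First I would carry out the block multiplication explicitly. Writing
\[
\Phi_\pm(t)^{-1}=\begin{pmatrix}X_\pm(-t) & -Y_\pm(t)e^{-i\ell\omega t}\\ 0 & e^{-i\ell\omega t}\end{pmatrix},
\]
one finds that the $(1,1)$ block of the product $\Psi(t)C_\pm\Phi_\pm(t)^{-1}$, with $C_\pm$ the stated block matrix, equals $X(t)B_\pm X_\pm(-t)$; the bottom row reduces to $(0,\,1)$ since the scalar entries $e^{\pm i\ell\omega t}$ cancel; and the $(1,2)$ block equals $e^{-i\ell\omega t}\bigl(X(t)Y(t)-X(t)B_\pm(Y_\pm(t)-b_\pm)\bigr)$. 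Thus the product tends to $\id_3$ once the two displayed relations in \eqref{eqn:lem3d} hold, together with a mild decay rate on $X(t)B_\pm X_\pm(-t)-\id_2$, and the lemma reduces to constructing $B_\pm$ and $b_\pm$.

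Next I would construct $B_\pm$. Since $x^\h(t)\to x_\pm$ along the stable and unstable manifolds at the exponential rates $\lambda_\pm$, the coefficient matrix $J\D^2H(x^\h(t))$ converges to $J\D^2H(x_\pm)$ exponentially, so the difference is integrable on $[0,\infty)$ and $(-\infty,0]$. The standard theory of asymptotic integration for linear systems with integrable coefficient perturbation then furnishes a fundamental matrix of the $\xi$-part of \eqref{eqn:vehl} asymptotic to $X_\pm(t)=Q_\pm\,\mathrm{diag}(e^{\pm\lambda_\pm t},e^{\mp\lambda_\pm t})\,Q_\pm^{-1}$. By \eqref{eqn:lim1} and \eqref{eqn:lim2} the two columns of $X(t)$ already realise the decaying and growing such solutions, their leading vectors being eigenvectors of $J\D^2H(x_\pm)$ for $\mp\lambda_\pm$ and $\pm\lambda_\pm$ respectively; hence a constant nonsingular $B_\pm$, a scaling that rewrites these columns in the eigenbasis $Q_\pm$, satisfies $X(t)B_\pm X_\pm(-t)\to\id_2$, which is the first relation in \eqref{eqn:lem3d}.

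Then I would fix $b_\pm$. Both $X(t)Y(t)$ and $X_\pm(t)Y_\pm(t)$ are particular solutions of the inhomogeneous equations obtained from \eqref{eqn:vehl} and \eqref{eqn:ve0}, driven by $\hat g_\ell(x^\h(t))e^{i\ell\omega t}$ and $\hat g_\ell(x_\pm)e^{i\ell\omega t}$; since $i\ell\omega$ is off-resonance with $\pm\lambda_\pm$, each splits into a bounded forced response plus a homogeneous part. Because the systems and the forcings converge as $t\to\pm\infty$, the bounded forced responses agree in the limit and cancel in the difference, leaving only homogeneous (growing and decaying) terms. Reading the growing coefficient of $X(t)Y(t)$ from \eqref{eqn:lim2} and that of $X_\pm(t)Y_\pm(t)$ from the explicit $Y_\pm(t)$, I would define $b_\pm$, equivalently its component $Q_\pm^{-1}b_\pm$ along the unstable direction, so that $X_\pm(t)b_\pm$ exactly removes the growing discrepancy; then $X_\pm(t)(Y_\pm(t)-b_\pm)-X(t)Y(t)$ has no growing part and its remaining bounded and decaying terms tend to $0$, giving the second relation.

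The hard part will be the second relation, namely upgrading ``bounded'' to ``tending to zero.'' This needs two quantitative estimates: that the oscillatory forced responses of the variable and limiting systems differ by an integrable, hence vanishing, amount; and that the cross term $(X(t)B_\pm-X_\pm(t))(Y_\pm(t)-b_\pm)$, a product of an $o(1)$ factor from the previous paragraph and a factor growing like $e^{\pm\lambda_\pm t}$, still vanishes. The latter is where care is required: one must use the sharper rate $X(t)B_\pm X_\pm(-t)-\id_2=O(e^{\mp\lambda_\pm t})$, coming from $\int_t^{\pm\infty}\|J\D^2H(x^\h(s))-J\D^2H(x_\pm)\|\,\d s$, to beat the $e^{\pm\lambda_\pm t}$ growth. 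I would assemble these estimates by writing the difference of the two particular solutions through variation of parameters and integrating the integrable remainder from $\pm\infty$, which simultaneously selects the decaying solution and produces the constant $b_\pm$.
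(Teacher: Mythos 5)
Your proposal is correct in substance, but it takes a genuinely different route from the paper's. The paper disposes of the whole lemma in one stroke: since $J\D^2H(x^\h(t))\to J\D^2H(x_\pm)$ exponentially (hence integrably), it applies the standard asymptotic-integration theorem (Theorem~8.1 in Section~3.8 of \cite{CL55}) to the \emph{full three-dimensional} systems \eqref{eqn:vehl} and \eqref{eqn:ve0} --- the inhomogeneous column being absorbed as the $\eta_\ell$-component --- obtaining fundamental matrices $\tilde{\Psi}_\pm(t)$, $\tilde{\Phi}_\pm(t)$ with $\tilde{\Psi}_\pm(t)\tilde{\Phi}_\pm(t)^{-1}\to\id_3$; it then writes $\tilde{\Psi}_\pm(t)=\Psi(t)\tilde{C}_\pm$ and $\Phi_\pm(t)=\tilde{\Phi}_\pm(t)\tilde{D}_\pm$ and uses the triangular block structures of \eqref{eqn:Psi} and \eqref{eqn:Phi} to bring the constant matrix $\tilde{C}_\pm\tilde{D}_\pm$ into the stated form $\bigl(\begin{smallmatrix}B_\pm & B_\pm b_\pm\\ 0 & 1\end{smallmatrix}\bigr)$. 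You instead apply Levinson asymptotics only to the $2\times 2$ homogeneous part (producing $B_\pm$) and treat the third column by hand, matching particular solutions via variation of parameters and choosing $b_\pm$ to cancel the growing discrepancy; this correctly reproduces what the paper records afterwards in \eqref{eqn:b}--\eqref{eqn:c}. Two remarks on your estimates: (i) the cross term $(X(t)B_\pm-X_\pm(t))(Y_\pm(t)-b_\pm)$ does not in fact require the sharper rate $O(e^{\mp\lambda_\pm t})$, since once $b_\pm$ removes the growth the factor $X_\pm(t)(Y_\pm(t)-b_\pm)$ is bounded and the plain $o(1)$ from the first relation of \eqref{eqn:lem3d} suffices; (ii) the genuine delicacy sits in your variation-of-parameters step: the remainder $(J\D^2H(x^\h(t))-J\D^2H(x_\pm))X(t)Y(t)$ is only $O(1)$ (decay $e^{\mp\lambda_\pm t}$ against the growth $e^{\pm\lambda_\pm t}$ visible in \eqref{eqn:lim2}), hence not integrable, so you must first subtract the growing part of $X(t)Y(t)$ as a multiple of the growing column of the \emph{nonautonomous} homogeneous system (i.e., of $X(t)$, not of $X_\pm(t)$) before integrating from $\pm\infty$; your plan of ``removing the growing discrepancy first'' accommodates this, but it is the step to write out carefully. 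As for what each approach buys: the paper's argument is shorter and uniform, at the cost of leaving implicit the algebraic reduction of the connection matrix to the stated block form; yours is longer but constructive, yielding explicit formulas and decay rates that the paper itself needs later when deriving \eqref{eqn:b} and \eqref{eqn:c}.
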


\begin{proof}
We first note that
\[
\lim_{t\to\pm\infty}J\D^2H(x^\h(t))=J\D^2H(x_\pm)
\]
since $\lim_{t\to\pm\infty}x^\h(t)=x_\pm$.
Hence, by a standard result on the asymptotic behavior of linear systems
 (e.g., Theorem~8.1 in Section~3.8 of \cite{CL55}),
 there exist fundamental matrices $\tilde{\Psi}_\pm(t)$ and $\tilde{\Phi}_\pm(t)$
 of \eqref{eqn:veh} and \eqref{eqn:ve0}, respectively, such that
\[
\lim_{t\to\pm\infty}\tilde{\Psi}_\pm(t)\tilde{\Phi}_\pm(t)^{-1}=\id_3.
\]
We can write
\[
\tilde{\Psi}_\pm(t)=\Psi(t)\tilde{C}_\pm,\quad
\Phi_\pm(t)=\tilde{\Phi}_\pm(t)\tilde{D}_\pm,
\]
where $\tilde{C}_\pm,\tilde{D}_\pm$ are certain nonsingular matrices, so that
\[
\lim_{t\to\pm\infty}\Psi(t)\tilde{C}_\pm\tilde{D}_\pm\Phi_\pm(t)^{-1}=\id_3.
\]
Using \eqref{eqn:Psi} and \eqref{eqn:Phi} and noting that
\[
\Phi_\pm(t)^{-1}=\begin{pmatrix}
X_\pm(-t) & -Y_\pm(t)e^{-i\ell\omega t}\\
0 & e^{-i\ell\omega t}
\end{pmatrix},
\]
we obtain the desired result.
\end{proof}

Similar results were also used for homo- or heteroclinic orbits to saddle-centers
 in two-degree-of-freedom Hamiltonian systems in \cite{Y00,Y03b,YY19},
 and for two-dimensional linear systems in \cite{Y22b}.
We remark that $B_\pm$ is not unique.
Actually, we easily see that
\[
\lim_{t\to\pm\infty}X(t)\left(B_\pm+\begin{pmatrix}
0 & c\\
0 & 0
\end{pmatrix}Q_\pm^{-1}\right)X_\pm(t)=\id_2
\]
for any $c\in\Cset$.
%Let $B_0=B_+^{-1}B_-$.
%Using \eqref{eqn:X}, \eqref{eqn:X0} and the first equation of \eqref{eqn:lem3d}, we obtain
%\[
%B_\pm=
%\begin{pmatrix}
%\xi_{\pm 1}^{-1} & 0\\[1ex]
%-\xi_{\pm 2} & \xi_{\pm1}
%\end{pmatrix},
%\]
%so that
%\begin{equation}
%B_0=\begin{pmatrix}
%\xi_{+1} & 0\\[1ex]
%\xi_{+2} & \xi_{+1}^{-1}
%\end{pmatrix}\begin{pmatrix}
%\xi_{-1}^{-1} & 0\\[1ex]
%-\xi_{-2} & \xi_{-1}
%\end{pmatrix}
%=\begin{pmatrix}
%\displaystyle
% \frac{\xi_{+1}}{\xi_{-1}} & 0\\[2ex]
%\displaystyle
%\frac{\xi_{+2}}{\xi_{-1}}-\frac{\xi_{-2}}{\xi_{+1}}
%& \displaystyle
% \frac{\xi_{-1}}{\xi_{+1}}
%\end{pmatrix}.
%\label{eqn:B0}
%\end{equation}
Using \eqref{eqn:lim2}, \eqref{eqn:X0} and the second equation of \eqref{eqn:lem3d},
 we have
\begin{equation}
b_\pm=-\begin{pmatrix}
\xi_{\pm 1}\chi_\pm m_\pm\\[1ex]
(\xi_{\pm 2}\chi_\pm+\xi_{\pm 1}^{-1})m_\pm
\end{pmatrix}
-c_\pm,
\label{eqn:b}
\end{equation}
where
\begin{align}
c_\pm=&Q_\pm\begin{pmatrix}
\displaystyle
\frac{1}{\mp\lambda_\pm+i\ell\omega} & 0\\
0 & \displaystyle
\frac{1}{\pm\lambda_\pm+i\ell\omega}
\end{pmatrix}Q_\pm^{-1}\hat{g}_\ell(x_\pm)\notag\\
=&-\frac{1}{\lambda_\pm^2+\ell^2\omega^2}(J\D^2 H(x_\pm)+i\ell\omega\,\id_2)\hat{g}_\ell(x_\pm).
\label{eqn:c}
\end{align}
since
\begin{align*}
&
Q_\pm^{-1}X_\pm(t)Q_\pm(Q_\pm^{-1}Y_\pm(t)-Q_\pm^{-1}b_\pm)\\
&
=\begin{pmatrix}
e^{\pm\lambda_\pm t} & 0\\
0 & e^{\mp\lambda_\pm t}
\end{pmatrix}\left(
\begin{pmatrix}
\displaystyle
\frac{e^{(\mp\lambda_\pm+i\ell\omega)t}-1}{\mp\lambda_\pm+i\ell\omega} & 0\\
0 & \displaystyle
\frac{e^{(\pm\lambda_\pm+i\ell\omega)t}-1}{\pm\lambda_\pm+i\ell\omega}
\end{pmatrix}
Q_\pm^{-1}\hat{g}_\ell(x_\pm)-Q_\pm^{-1}b_\pm\right).
\end{align*}
Note that there does not uniquely exist $b_\pm$ like $B_\pm$.

% **********************************************************
% Section 4
% **********************************************************

\section{Proof of Theorem~\ref{thm:main}}

\begin{figure}[t]
\includegraphics[scale=1]{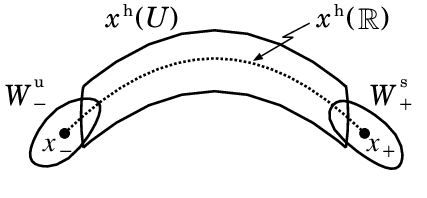}
\caption{Riemann surface $\Gamma=x^\h(U)\cup W_+^\s\cup W_-^\u$.
\label{fig:2a}}
\end{figure}

We are now in a position to prove Theorem~\ref{thm:main}.
The basic idea was previously used
 for nonintegrability of two-degree-of-freedom Hamiltonian systems or general dynamical systems
 near homo- or heteroclinic orbits in \cite{MP99,Y03b,YY17,YY19},
 for bifurcations of homoclinic orbits in \cite{BY12,Y22a},
 and for integrability of two-dimensional linear systems appearing
 in application of the inverse scattering transform
 (see, e.g., Chapter~9 of\cite{A11}) in \cite{Y22b}.

Let $W_\pm^{\s,\u}$
 be the one-dimensional local holomorphic stable and unstable manifolds of $x_\pm$.
See Section~1.7 of \cite{IY08} for the existence of such holomorphic stable and unstable manifolds.
Let $R > 0$ be sufficiently large and let $U$
 be a neighborhood of the open interval  $(-R,R) \subset \Rset$ in $\Cset$
 such that $x^\h(U)$ contains no equilibrium
 and intersects both $W_+^\s$ and $W_-^\u$.
Obviously, $x^\h(U)$ is a one-dimensional complex manifold with boundary.
We take $\Gamma=x^\h(U)\cup W_+^\s\cup W_-^\u$
 and the inclusion map as immersion $\iota:\Gamma\to\Cset^2$.
See Fig.~\ref{fig:2a}.
If $x_+=x_-$ and $x^\h(t)$ is a homoclinic orbit,
 then small modifications are needed in the definitions of $\Gamma$ and $\iota$.
Let $0_\pm\in\Gamma$ denote points corresponding to the equilibria $x_\pm$.
Taking three charts, $W_\pm^{\s,\u}$ and $x^\h(U)$, 
 we rewrite the linear system \eqref{eqn:vehl} on $\Gamma$ as follows.

In $x^\h(U)$ we use the complex variable $t\in U$ as the coordinate
 and rewrite \eqref{eqn:vehl} as
\begin{equation}
\frac{\d\xi}{\d t}=J\D^2H(\iota(t))\xi+\hat{g}(\iota(t),\eta_+,\eta_-),\quad
\dot{\eta}_\pm=\pm i\omega\eta_\pm.
\label{eqn:ve1}
\end{equation}
which has no singularity there.
In $W_+^\s$ and $W_-^\u$
 there exist local coordinates $z_+$ and $z_-$, respectively,
 such that $z_\pm(0_\pm)=0$ and $\d/\d t=h_\pm(z_\pm)\d/\d z_\pm$,
 where $h_\pm(z_\pm)=\mp\lambda_\pm z_\pm+O(|z_\pm|^2)$ are holomorphic functions near $z_\pm=0$.
We use the coordinates $z_\pm$ and rewrite \eqref{eqn:vehl} as
\begin{equation}
\frac{\d\xi}{\d z_\pm}=\frac{1}{h_\pm(z_\pm)}J\D^2H(\iota(z_\pm))\xi+\hat{g}_\ell(z_\pm)\eta_\ell,\quad
\frac{\d\eta_\pm}{\d z_\pm}=\frac{i\ell\omega}{h_\pm(z_\pm)}\eta_\ell,
\label{eqn:ve2}
\end{equation}
which have regular singularities at $z_\pm=0$.
Let $M_\pm$ be monodromy matrices of the linear system consisting
 of \eqref{eqn:ve1} and \eqref{eqn:ve2} on $\Gamma$ around $z_\pm=0$.

Let
\[
C_\pm=
\begin{pmatrix}
B_\pm & B_\pm b_\pm\\[0.5ex]
0 & 1
\end{pmatrix},
\]
and let
\begin{equation*}
C_0=
C_+^{-1}C_-=
\begin{pmatrix}
B_+^{-1} & -b_+\\
0 & 1
\end{pmatrix}
\begin{pmatrix}
B_- & B_-b_-\\
0 & 1
\end{pmatrix}
=\begin{pmatrix}
B_0 & B_0b_--b_+\\
0 & 1
\end{pmatrix},
%\label{eqn:c0}
\end{equation*}
where $B_0=B_+^{-1}B_-$.
Recall \eqref{eqn:c}.

\begin{lem}
\label{lem:4a}
The monodromy matrices can be expressed as
\[
M_+=C_0^{-1}
\begin{pmatrix}
\id_2 & (e^{2\pi\ell\omega/\lambda_+}-1)c_+\\
0 & e^{2\pi\ell\omega/\lambda_+}
\end{pmatrix}C_0
\]
and
\begin{equation}
M_-=
\begin{pmatrix}
\id_2 & (e^{-2\pi\ell\omega/\lambda_-}-1)c_-\\
0 & e^{-2\pi\ell\omega/\lambda_-}
\end{pmatrix}.
\label{eqn:M-}
\end{equation}
\end{lem}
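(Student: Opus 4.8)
The plan is to read off each monodromy matrix from the constant-coefficient limiting systems \eqref{eqn:ve0}, whose fundamental matrices $\Phi_\pm(t)$ are written out explicitly in \eqref{eqn:Phi}, and then to transfer the result to the actual system \eqref{eqn:ve2} and express both matrices in one common frame via the connection matrices of Lemma~\ref{lem:3d}. First I would translate a loop around the regular singular point $z_\pm=0$ into a complex shift of $t$. Since $\d/\d t=h_\pm(z_\pm)\,\d/\d z_\pm$ with $h_\pm(z_\pm)=\mp\lambda_\pm z_\pm+O(|z_\pm|^2)$, we have $z_\pm\sim e^{\mp\lambda_\pm t}$ near the saddle, so a single counterclockwise turn $z_\pm\mapsto e^{2\pi i}z_\pm$ corresponds to $t\mapsto t+\tau_\pm$ with $\tau_\pm=\mp 2\pi i/\lambda_\pm$.

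Next I would compute the monodromy of the model fundamental matrix. Because \eqref{eqn:ve0} has constant coefficients and $\Phi_\pm(0)=\id_3$, analytic continuation along the loop gives $\Phi_\pm(t+\tau_\pm)=\Phi_\pm(t)\Phi_\pm(\tau_\pm)$, so the model monodromy is $\hat M_\pm:=\Phi_\pm(\tau_\pm)$. Evaluating the blocks of \eqref{eqn:Phi} at $t=\tau_\pm$ using \eqref{eqn:X0} and the formula for $Y_\pm$, both diagonal exponents of $X_\pm(\tau_\pm)$ reduce to $1$, so $X_\pm(\tau_\pm)=\id_2$; moreover $e^{i\ell\omega\tau_\pm}=e^{\pm 2\pi\ell\omega/\lambda_\pm}$, and in $Y_\pm(\tau_\pm)$ the two diagonal numerators collapse to the common value $e^{\pm 2\pi\ell\omega/\lambda_\pm}-1$, leaving $Y_\pm(\tau_\pm)=(e^{\pm 2\pi\ell\omega/\lambda_\pm}-1)c_\pm$ with $c_\pm$ as in \eqref{eqn:c}. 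Hence
\[
\hat M_\pm=\begin{pmatrix}\id_2 & (e^{\pm 2\pi\ell\omega/\lambda_\pm}-1)c_\pm\\ 0 & e^{\pm 2\pi\ell\omega/\lambda_\pm}\end{pmatrix}.
\]

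I would then transfer these to the actual system. By Lemma~\ref{lem:3d}, near $z_\pm=0$ the fundamental matrix $\Psi$ of \eqref{eqn:ve2} satisfies $\Psi\sim\Phi_\pm C_\pm^{-1}$; since the singularity is regular this leading behaviour fixes the monodromy, so the monodromy of $\Psi$ about $z_\pm$ is $C_\pm\hat M_\pm C_\pm^{-1}$. Monodromy matrices are defined only up to a common change of fundamental matrix, and passing from $\Psi$ to $\Psi C_-$ conjugates them all by $C_-$. In this frame the $z_-$ monodromy becomes $C_-^{-1}(C_-\hat M_-C_-^{-1})C_-=\hat M_-$, which is \eqref{eqn:M-}, while the $z_+$ monodromy becomes $C_-^{-1}(C_+\hat M_+C_+^{-1})C_-=C_0^{-1}\hat M_+C_0$ with $C_0=C_+^{-1}C_-$, as claimed.

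The main obstacle is the step asserting that the leading asymptotics fixes the monodromy, i.e. that the monodromy of the variable-coefficient system \eqref{eqn:ve2} coincides in the frame $\Phi_\pm$ with that of the constant-coefficient model. The matching of Lemma~\ref{lem:3d} holds only along real rays $t\to\pm\infty$ and does not by itself produce a single-valued connecting factor around $z_\pm=0$. To close the gap I would use that \eqref{eqn:ve2} has a regular singularity at $z_\pm=0$ with residue equal to $(\mp\lambda_\pm)^{-1}$ times the constant coefficient matrix of \eqref{eqn:ve0}, hence the same exponents $\{-1,+1,\mp i\ell\omega/\lambda_\pm\}$ as the model. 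The only integer resonance is the difference $2$ inside the $\xi$-block, and the explicit fundamental matrix $X(t)$ of Lemma~\ref{lem:3c} shows that its columns behave like $z_\pm^{\mp 1}$ with no logarithmic term, so the $\xi$-block of the monodromy is genuinely $\id_2$ while the coupling to $\eta_\ell$ is captured exactly by the model computation. Together these facts identify the actual monodromy with $\hat M_\pm$ up to the conjugations by $C_\pm$ already accounted for.
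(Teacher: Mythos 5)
Your first two paragraphs and the frame bookkeeping reproduce the paper's own route almost exactly: the paper also sets $\tilde{\Psi}(t)=\Psi(t)C_-$, uses Lemma~\ref{lem:3d} to match $\tilde{\Psi}$ with $\Phi_-$ as $t\to-\infty$ and with $\Phi_+C_0$ as $t\to+\infty$, reads the local monodromies off the explicit model fundamental matrices (written there as conjugates by $\mathrm{diag}(Q_\pm,1)$, which collapse to your $\hat{M}_\pm$), and chooses the base point near $0_-$ so that $M_-=\hat{M}_-$ and $M_+=C_0^{-1}\hat{M}_+C_0$. Your computations $\tau_\pm=\mp 2\pi i/\lambda_\pm$, $X_\pm(\tau_\pm)=\id_2$, $e^{i\ell\omega\tau_\pm}=e^{\pm2\pi\ell\omega/\lambda_\pm}$ and $Y_\pm(\tau_\pm)=(e^{\pm2\pi\ell\omega/\lambda_\pm}-1)c_\pm$ are all correct, and you rightly observe that the one step neither asymptotic matching nor the regular-singularity structure gives for free is the identification of the actual monodromy with the model's; the paper dispatches this point in a single clause.

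The genuine gap is in your proposed resolution: the integer resonance between the exponents $-1$ and $+1$ of the $\xi$-block is \emph{not} harmless, and your claim that the columns of $X(t)$ carry ``no logarithmic term'' is false in general. The function $\chi(t)$ acquires a term linear in $t$ (logarithmic in $z_\pm$) whenever the constant term in the expansion of $\D_{x_2}^2H(x^\h(t))/\D_{x_2}H(x^\h(t))^2$ in powers of $e^{\mp\lambda_\pm t}$ --- equivalently the residue at $0_\pm$ of the corresponding one-form on $\Gamma$ --- is nonzero; the leading asymptotics $\chi(t)=\chi_\pm e^{\pm2\lambda_\pm t}+\cdots$ used in Lemma~\ref{lem:3d} cannot detect this, since the log rides on the subdominant exponent-$(+1)$ solution. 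This residue is nonzero even in the paper's own example \eqref{eqn:ex1}: there $\D_{x_2}H=x_2$, $\D_{x_2}^2H=1$, $x_2^\h(t)=-\sqrt{2}\sech t\tanh t$, and
\begin{equation*}
\chi(t)=\int\frac{\d t}{2\sech^2 t\,\tanh^2 t}
=\frac{1}{2}\int\bigl(\cosech^2 t+2+\sinh^2 t\bigr)\d t
=-\frac{\coth t}{2}+\frac{3t}{4}+\frac{\sinh 2t}{8},
\end{equation*}
so with $z_+=e^{-t}$ (here $\lambda_+=1$) the term $3t/4=-\tfrac{3}{4}\log z_+$ is genuinely logarithmic, and continuation around $z_+=0$ sends the second column of $X$ to itself minus $\tfrac{3\pi i}{2}$ times the first (the first column, $J\D H(x^\h)$, is single-valued). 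The restriction of the monodromy to the invariant subspace $\{\eta_\ell=0\}$ is therefore a nontrivial unipotent matrix, and no conjugation --- in particular none of the $B_\pm$, $C_\pm$ changes of frame at your disposal --- can turn it into $\id_2$. The same computation goes through for \eqref{eqn:ex2}. So your closing argument does not close the gap you correctly identified: taken literally it would contradict the stated block form for the paper's own examples, and an additional idea (controlling or eliminating these unipotent contributions, or reworking the statement to accommodate them) is needed beyond what you, or indeed the paper's terse justification of this step, provide.
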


\begin{proof}
Let $\tilde{\Psi}(t)=\Psi(t)C_-$.
Then by Lemma~\ref{lem:3d}
 $\tilde{\Psi}(t)$ is a fundamental matrix of \eqref{eqn:veh} such that
\[
\lim_{t\to-\infty}\tilde{\Psi}(t)\Phi_-(-t)=\id_3\quad\mbox{and}\quad
\lim_{t\to+\infty}\tilde{\Psi}(t)C_0\Phi_+(-t)=\id_3.
\]
For the linear system consisting \eqref{eqn:ve1} and \eqref{eqn:ve2} on $\Gamma$,
 we take a fundamental matrix corresponding to $\tilde{\Psi}(t)$.
Since by \eqref{eqn:Phi} its analytic continuation yields the monodromy matrices
\[
\begin{pmatrix}
Q_\pm & 0\\
0 &1
\end{pmatrix}
\begin{pmatrix}
\id_2 & (e^{\pm 2\pi\ell\omega/\lambda_\pm}-1)Q_\pm^{-1}c_\pm\\[1ex]
0 & e^{\pm2\pi\ell\omega/\lambda_\pm}
\end{pmatrix}
\begin{pmatrix}
Q_\pm^{-1} & 0\\
0 &1
\end{pmatrix}
\]
along small loops around $0_\pm$,
 we choose the base point near $0_-$ to obtain the desired result.
\end{proof}

\begin{proof}[Proof of Theorem~$\ref{thm:main}$]
From Lemma~\ref{lem:4a} we have
\begin{equation}
M_+=\begin{pmatrix}
\id_2 & (e^{2\pi\ell\omega/\lambda_+}-1)(B_0^{-1}(b_++c_+)-b_-)\\[0.5ex]
0 & e^{2\pi\ell\omega/\lambda_+}
\end{pmatrix}
\label{eqn:M+}
\end{equation}
since
\[
C_0^{-1}=
\begin{pmatrix}
B_0^{-1} & B_0^{-1}b_+-b_-\\[0.5ex]
0 & 1
\end{pmatrix}.
\]

Suppose that $M_+$ and $M_-$ are commutative and that $\hat{M}_\ell\neq 0$.
From \eqref{eqn:M-} and \eqref{eqn:M+} we have
\[
B_0^{-1}(b_++c_+)-(b_-+c_-)=0,
\]
which yields
\begin{equation}
B_0^{-1}
\begin{pmatrix}
\xi_{+1}\chi_+\\
\xi_{+2}\chi_-+\xi_{+1}^{-1}
\end{pmatrix}m_+
-\begin{pmatrix}
\xi_{-1}\chi_-\\
\xi_{-2}\chi_-+\xi_{-1}^{-1}
\end{pmatrix}m_-
=0
\label{eqn:4a}
\end{equation}
by \eqref{eqn:b}.
Taking the indefinite integral \eqref{eqn:Y} such that $m_-=0$,
 we have $m_+=\hat{M}_\ell$ by \eqref{eqn:m}.
This contradicts \eqref{eqn:4a}.
Hence, if $\hat{M}_\ell\neq 0$,
 then $M_+$ and $M_-$ are not commutative.
We notice that the differential Galois group contains the monodromy group
 and use Theorem~\ref{thm:MR} and Lemmas~\ref{lem:3a} and \ref{lem:3b}
 to complete the proof.
\end{proof}

\begin{rmk}\
\label{rmk:4a}
\begin{enumerate}
\setlength{\leftskip}{-1.8em}
\item[(i)]
Our approach can apply to other time dependency of the perturbations.
For instance, let
\[
g(x,\theta)=\tilde{g}(x)\cn\left(\frac{t}{\sqrt{1-2k^2}}\right)+\tilde{g}_0(x)
\]
where $\cn$ is the Jacobi elliptic function
 with the elliptic modulus $k=\epsilon/\sqrt{2(1+\epsilon^2)}$.
Since $w_2=\epsilon\cn(t/\sqrt{1-2k^2})$ satisfies 
\[
\dot{w}_1=-w_2-w_2^3,\quad
\dot{w}_2=w_1,
\]
where $w_1=\epsilon(\d/\d t)\cn(t/\sqrt{1-2k^2})$, we have
\begin{equation}
\begin{split}
&
\dot{x}=J\D H(x^\h(t))+\epsilon\tilde{g}_0(x)+\tilde{g}_1(x)w_2,\\
&
\dot{\epsilon}=0,\quad
\dot{w}_1=-w_2-w_2^3,\quad
\dot{w}_2=w_1,\end{split}
\label{eqn:rmk4a}
\end{equation}
instead of \eqref{eqn:rsys}.
The system \eqref{eqn:rmk4a} has a solution
 $(x,\epsilon,w_1,w_2)=(x^\h(t),0,0,0)\in\Rset^2\times\Rset\times\Rset\times\Rset$
  and its VE along the solution is given by
\[
\dot{\xi}=J\D^2H(x^\h(t))\xi+\epsilon\tilde{g}_0(x)+\tilde{g}_1(x)\zeta_2,\quad
\dot{\epsilon}=0,\quad
\dot{\zeta}_1=-\zeta_2,\quad
\dot{\zeta}_2=\zeta_1,
\]
which is the same as the VE of \eqref{eqn:asys} along the solution
 $(x,\epsilon,u_1,v_1)=(x^\h(t),0,0,0)$ with $N=1$ and $\omega=1$.
So we can apply the arguments of Sections~$3$ and $4$ to this case.
\item[(ii)]
Suppose that $x^\h(t)$ is such a homoclinic orbit as stated in Remark~$\ref{rmk:1a}$.
Then we can apply the result of Morales-Ruiz {\rm\cite{M02}}
 with the assistance of Ayoul and Zung {\rm\cite{AZ10}}
 to prove the complex-meromorphically nonintegrability of \eqref{eqn:rsys}
 in the sense of Theorem~$\ref{thm:main}$
 if $\chi(t)$ is meromorphic on $\Cset$
 but it is not rational function of $e^{\lambda t}$ and $e^{i\omega t}$,
 and the integral
\[
\int\D H(x^\h(t))\cdot g(x^\h(t),\omega t+\theta)\d t
\]
does not belong to $\Cset(e^{\lambda t},e^{i\omega t},\chi(t))$
 instead of the condition that $M(\theta)$ is constant.
See Assumptions $(2)$ and $(3)$ in {\rm\cite{M02}}.
\end{enumerate}
\end{rmk}

% **********************************************************
% Section 5
% **********************************************************

\section{Examples}

In this section we illustrate our theory
 for the two examples \eqref{eqn:ex1} and \eqref{eqn:ex2}.

\subsection{System~\eqref{eqn:ex1}}

\begin{figure}
\includegraphics[scale=0.25]{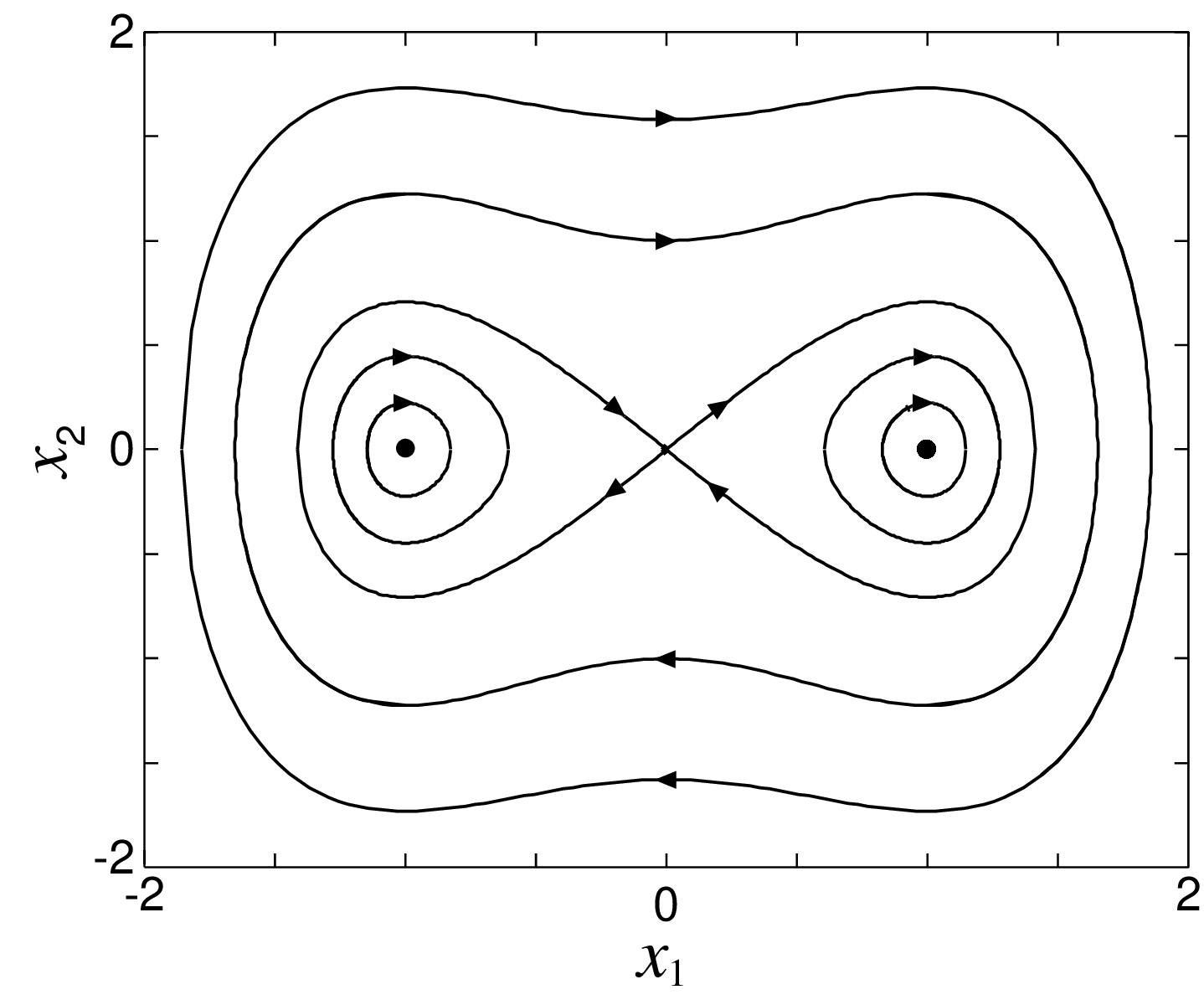}
\caption{Phase portrait of \eqref{eqn:ex1} with $\epsilon=0$.
\label{fig:5a}}
\end{figure}

Consider the system~\eqref{eqn:ex1}.
The unperturbed system is Hamiltonian with the Hamiltonian
\[
H(x)=-\tfrac{1}{2}x_1^2+\tfrac{1}{4}x_1^4+\tfrac{1}{2}x_2^2.
\]
We easily see that assumptions~(A1)-(A3) hold with $x_+=x_-=(0,0)$,
 $\lambda_\pm=1$, $N=1$ and
\begin{equation}
\hat{g}_1(x),\hat{g}_{-1}(x)=(0,\tfrac{1}{2}\beta)^\T,\quad
\hat{g}_0(x)=(0,-\delta x_2)^\T,
\label{eqn:g}
\end{equation}
where the superscript `{\scriptsize$T$}' represents the transpose operator.
See Fig.~\ref{fig:5a} for a phase portrait of \eqref{eqn:ex1} with $\epsilon=0$.
We write \eqref{eqn:rsys} as
\begin{equation}
\begin{split}
&
\dot{x}_1=x_2,\quad
\dot{x}_2=x_1-x_1^3-\epsilon\delta x_2+\beta u_1,\\
&
\dot{\epsilon}=0,\quad
\dot{u}_1=-\omega v_1,\quad
\dot{v}_1=\omega u_1,
\end{split}
\label{eqn:aex1}
\end{equation}
and compute the Melnikov function \eqref{eqn:Mel} for \eqref{eqn:ex1h} as
\begin{align*}
M(\theta)
=&\int_{-\infty}^\infty x_2^\h(t)(\beta\cos(\omega t+\theta)-\delta x_2^\h(t))\d t\\
=&-8\delta\pm 2\pi\beta\sech\biggl(\frac{\pi\omega}{2}\biggr)\cos\theta.
\end{align*}
Applying Theorem~\ref{thm:main}, we obtain the following result.

\begin{prop}
\label{prop:ex1}
The system~\eqref{eqn:aex1} is not real-meromorphically integrable
 near $\hat{\Gamma}=(\{x^\h(t)\mid t\in\Rset\}
 \cup\{0\})\times\{(\epsilon,u_2,v_2)=(0,0,0)\in\Rset\times\Rset\times\Rset\}$.
\end{prop}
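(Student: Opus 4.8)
The plan is to reduce the statement to Theorem~\ref{thm:main}, whose hypotheses are essentially verified in the discussion preceding the proposition; the only residual task is to confirm that the Melnikov function is nonconstant exactly when $\beta\neq0$. First I would record that the unperturbed part of~\eqref{eqn:ex1} is Hamiltonian and that assumptions~(A1)--(A3) hold: the origin is a hyperbolic saddle with $\lambda_\pm=1$ and $x_+=x_-=0$, the curve~\eqref{eqn:ex1h} is the homoclinic orbit required in~(A2), and the forcing $g(x,\omega t)=(0,\beta\cos\omega t-\delta x_2)^\T$ is a trigonometric polynomial in $\omega t$ with $N=1$, so that (A3) holds with the Fourier data~\eqref{eqn:g}. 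This identifies the extended system with~\eqref{eqn:aex1}.

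Next I would carry out (or recall) the Melnikov integral~\eqref{eqn:Mel} along~\eqref{eqn:ex1h}. Since $g_1\equiv0$ and $\D_{x_2}H(x^\h(t))=x_2^\h(t)=-\sqrt{2}\sech t\tanh t$, the integrand reduces to $x_2^\h(t)\bigl(\beta\cos(\omega t+\theta)-\delta x_2^\h(t)\bigr)$, so $M(\theta)$ splits into a $\theta$-independent dissipative part coming from $-\delta\int_{-\infty}^\infty (x_2^\h(t))^2\,\d t$ and a forcing part proportional to $\beta$. The dissipative integral is elementary, and the forcing integral follows, after one integration by parts, from the standard contour integral $\int_{-\infty}^\infty\sech t\cos\omega t\,\d t=\pi\sech(\pi\omega/2)$ (summing residues of $\sech t$ at $t=\tfrac{1}{2}\pi i(2k+1)$ in the upper half-plane). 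This yields the stated expression $M(\theta)=-8\delta\pm2\pi\beta\sech(\pi\omega/2)\cos\theta$, whose oscillatory term has amplitude a nonzero multiple of $\beta\sech(\pi\omega/2)$.

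Finally I would observe that $\sech(\pi\omega/2)>0$ for every $\omega>0$, so the amplitude of the $\theta$-dependent term is nonzero precisely when $\beta\neq0$; hence $M(\theta)$ is not constant under the hypothesis $\beta\neq0$, and Theorem~\ref{thm:main} applies directly to give the real-meromorphic nonintegrability of~\eqref{eqn:aex1} near $\hat{\Gamma}$. Equivalently, one could verify the hypothesis in the Fourier form~\eqref{eqn:Mj} by checking that $\hat{M}_1=\tfrac{1}{2}\beta\int_{-\infty}^\infty x_2^\h(t)e^{i\omega t}\,\d t\neq0$, which reduces to the same residue evaluation.

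Since the substantive content is entirely contained in Theorem~\ref{thm:main}, there is no genuine obstacle here; the only point requiring care is the bookkeeping in the Melnikov integral --- discarding the odd part of the forcing integrand and selecting the correct poles of $\sech t$ --- which is routine.
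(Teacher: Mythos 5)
Your proposal is correct and follows essentially the same route as the paper: verify (A1)--(A3) with $x_+=x_-=0$, $\lambda_\pm=1$, $N=1$ and the Fourier data \eqref{eqn:g}, compute the Melnikov function \eqref{eqn:Mel} along \eqref{eqn:ex1h}, note its $\theta$-dependent amplitude is a nonzero multiple of $\beta\sech(\pi\omega/2)$, and apply Theorem~\ref{thm:main}. The only caveat is immaterial bookkeeping: your residue/integration-by-parts evaluation actually produces an oscillatory term $\sqrt{2}\,\pi\omega\beta\sech(\pi\omega/2)$ times a sinusoid in $\theta$ (and a constant dissipative term), which differs from the paper's displayed constants and phase but in no way affects the nonconstancy of $M(\theta)$ for $\beta\neq 0$, which is all the theorem requires.
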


\begin{rmk}
If $\beta/\delta>(4/\pi)\cosh(\pi\omega/2)$,
 then $M(\theta)$ has a simple zero,
 so that there exist transverse homoclinic orbits
 and chaotic motions occurs in \eqref{eqn:ex1} and \eqref{eqn:aex1},
 as stated in Section~$1$.
Proposition~$\ref{prop:ex1}$ means that
 the system \eqref{eqn:aex1} is Bogoyavlenskij-nonintegrable
 even when it does not exhibit chaotic dynamics.  
\end{rmk}

\subsection{System~\eqref{eqn:ex2}}

\begin{figure}
\includegraphics[scale=0.45]{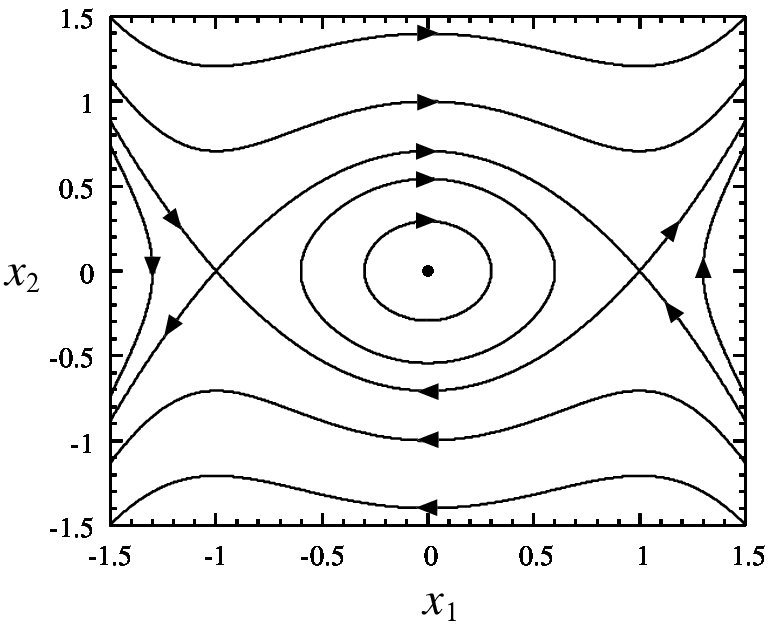}
\caption{Phase portrait of \eqref{eqn:ex2} with $\epsilon=0$.
\label{fig:5b}}
\end{figure}

Consider the system~\eqref{eqn:ex2}.
The unperturbed system is Hamiltonian with the Hamiltonian
\[
H(x)=\tfrac{1}{2}x_1^2-\tfrac{1}{4}x_1^4+\tfrac{1}{2}x_2^2.
\]
We easily see that assumptions~(A1)-(A3) hold with $x_+=(\pm 1,0)$, $x_-=(\mp1,0)$,
 $\lambda_\pm=1$, $N=1$ and \eqref{eqn:g}.
See Fig.~\ref{fig:5b} for a phase portrait of \eqref{eqn:ex1} with $\epsilon=0$.
We write \eqref{eqn:rsys} as
\begin{equation}
\begin{split}
&
\dot{x}_1=x_2,\quad
\dot{x}_2=-x_1+x_1^3-\epsilon\delta x_2+\beta u_1,\\
&
\dot{\epsilon}=0,\quad
\dot{u}_1=-\omega v_1,\quad
\dot{v}_1=\omega u_1,
\end{split}
\label{eqn:aex2}
\end{equation}
and compute the Melnikov function \eqref{eqn:Mel} for \eqref{eqn:ex2h} as
\begin{align*}
M(\theta)
=&\int_{-\infty}^\infty x_2^\h(t)(\beta\cos(\omega t+\theta)-\delta x_2^\h(t))\d t\\
=&-\frac{2\sqrt{2}}{3}\delta
 \pm \sqrt{2}\pi\omega\beta\cosech\biggl(\frac{\pi\omega}{\sqrt{2}}\biggr)\cos\theta.
\end{align*}
Applying Theorem~\ref{thm:main}, we obtain the following result.

\begin{prop}
\label{prop:ex2}
The system~\eqref{eqn:aex2} is not real-meromorphically integrable near $\hat{\Gamma}$.
\end{prop}

\begin{rmk}
\label{rmk:5b}
If $\beta/\delta>(2/3\pi\omega)\sinh(\pi\omega/\sqrt{2})$,
 then $M(\theta)$ has a simple zero,
 so that there exist transverse heteroclinic orbits
 from a periodic orbit near $x=x_-$ to one near $x=x_+$ and vice versa,
 i.e., transverse heteroclinic cycles,
 which indicate chaotic motions in \eqref{eqn:ex2} and \eqref{eqn:aex2} 
 $($see, e.g., Section~$26.1$ of {\rm\cite{W03})},
 as in \eqref{eqn:ex1} and \eqref{eqn:aex1}.
Proposition~$\ref{prop:ex2}$ means that
 the system \eqref{eqn:aex2} is Bogoyavlenskij-nonintegrable
 even when it does not exhibit chaotic dynamics.  
\end{rmk}

\subsection{System~\eqref{eqn:ex3}}

\begin{figure}
\includegraphics[scale=0.45]{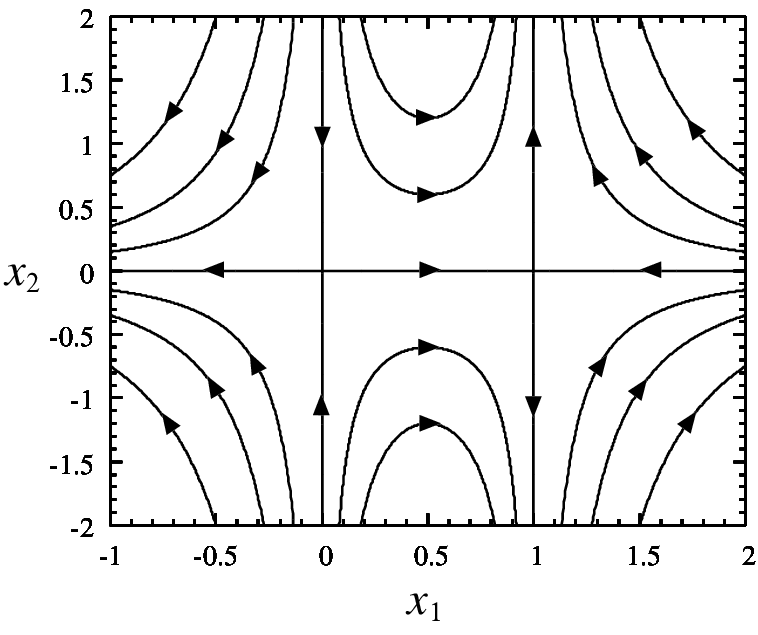}
\caption{Phase portrait of \eqref{eqn:ex2} with $\epsilon=0$.
\label{fig:5c}}
\end{figure}

Consider the system~\eqref{eqn:ex3}.
The unperturbed system is Hamiltonian with the Hamiltonian
\[
H(x)=x_1x_2(1-x_1).
\]
We easily see that assumptions~(A1)-(A3) hold with $x_+=(1,0)$, $x_-=(0,0)$,
 $\lambda_\pm=1$, $N=1$ and \eqref{eqn:g}.
See Fig.~\ref{fig:5c} for a phase portrait of \eqref{eqn:ex1} with $\epsilon=0$.
We write \eqref{eqn:rsys} as
\begin{equation}
\begin{split}
&
\dot{x}_1=x_1-x_1^2,\quad
\dot{x}_2=-(1+\delta)x_2+2x_1x_2+\beta u_1,\\
&
\dot{\epsilon}=0,\quad
\dot{u}_1=-\omega v_1,\quad
\dot{v}_1=\omega u_1,
\end{split}
\label{eqn:aex3}
\end{equation}
and compute the Melnikov function \eqref{eqn:Mel} for \eqref{eqn:ex2h} as
\begin{align*}
M(\theta)
=&\int_{-\infty}^\infty(x_1^\h(t)-x_1^\h(t)^2)(\beta\cos(\omega t+\theta)-\delta x_2^\h(t))\d t\\
=&\pi\omega\cosech\pi\omega\,\cos\theta.
\end{align*}
Applying Theorem~\ref{thm:main}, we obtain the following result.

\begin{prop}
\label{prop:ex3}
The system~\eqref{eqn:aex3} is not real-meromorphically integrable near $\hat{\Gamma}$.
\end{prop}

\begin{rmk}
Since $\beta>0$, $M(\theta)$ has a simple zero
 so that there exist transverse heteroclinic orbits
 from a periodic orbit near $x=x_-$ to one near $x=x_+$,
 as stated in Remark~$\ref{rmk:5b}$,
 but there do not exist transverse heteroclinic cycles.
Thus, the occurrence of chaotic dynamics is not guaranteed
 in \eqref{eqn:ex3} and \eqref{eqn:aex3},
 but Proposition~$\ref{prop:ex3}$ means that
 the system \eqref{eqn:aex3} is Bogoyavlenskij-nonintegrable.
\end{rmk}

\section*{Acknowledgements}
This work was partially supported by the JSPS KAKENHI Grant Number JP22H01138.

%\section*{Data Availability}
%Data sharing not applicable to this article
% as no datasets were generated or analyzed during the current study.
 
% **********************************************************
% Appendices
% **********************************************************

%\appendix
%\renewcommand{\theequation}{\Alph{section}.\arabic{equation}}

% **********************************************************
% Bibliography
% **********************************************************

\end{document}